\title{Restricted Stacks as Functions}
\author{Katalin Berlow}
\date{\today}
\newcommand{\sort}{\operatorname{sort}}
\newcommand{\id}{\operatorname{id}}
\newcommand{\N}{\mathbb{N}}
\newcommand{\av}[1]{\textrm{Av}_n(#1)}
\newcommand{\Z}{\mathbb{Z}}
\newtheorem{theorem}{Theorem}[section]
\newtheorem{lemma}[theorem]{Lemma}
\newtheorem{cor}[theorem]{Corollary}
\newtheorem{conj}[theorem]{Conjecture}
\newtheorem{ques}[theorem]{Open Question}
\theoremstyle{definition}
\newtheorem{definition}[theorem]{Definition}
\begin{document}

\maketitle
\begin{abstract}
The stack sort algorithm has been the subject of extensive study over the years. In this paper we explore a generalized version of this algorithm where instead of avoiding a single decrease, the stack avoids a set $T$ of permutations. We let $s_T$ denote this map. We classify for which sets $T$ the map $s_T$ is bijective. A corollary to this answers a question of Baril, Cerbai, Khalil, and Vajnovszki about stack sort composed with $s_{\{\sigma,\tau\}}$, known as the $(\sigma,\tau)$-machine. This fully classifies for which $\sigma$ and $\tau$ the preimage of the identity under the $(\sigma,\tau)$-machine is counted by the Catalan numbers.  We also prove that the number of preimages of a permutation under the map $s_T$ is bounded by the Catalan numbers, with a shift of indices.  For $T$ of size 1, we classify exactly when this bound is sharp. We also explore the periodic points and maximum number of preimages of various $s_T$ for $T$ containing two length $3$ permutations. 
\end{abstract}

\section{Introduction}

 Consider the problem of sorting permutations using the following algorithm which maintains a stack. If the stack is currently empty or if the leftmost element of the input is smaller than the top element of the stack, then move the leftmost element onto the stack. Otherwise, move the top element off the stack and append it to the output. This algorithm is called \textit{stack sort}. We will denote the map by $s$. It can be visualised as follows: 

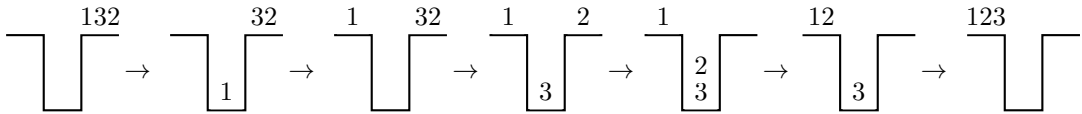
\begin{figure}[h]\begin{center}
\begin{tikzpicture}[scale=0.5]
				\draw[thick] (0,0) -- (1,0) -- (1,-2) -- (2,-2) -- (2,0) -- (3,0);
				\node[fill = white, draw = white] at (2.5,.5) {132};
				\node[fill = white, draw = white] at (1.5,-1.5) {};
				\node[fill = white, draw = white] at (1.5,-.8) {};
				\node[fill = white, draw = white] at (3.5,-1) {$\rightarrow$};
              \end{tikzpicture}
         \begin{tikzpicture}[scale=0.5]
				\draw[thick] (0,0) -- (1,0) -- (1,-2) -- (2,-2) -- (2,0) -- (3,0);
				\node[fill = white, draw = white] at (2.5,.5) {32};
				\node[fill = white, draw = white] at (1.5,-1.5) {1};
				\node[fill = white, draw = white] at (1.5,-.8) {};
					\node[fill = white, draw = white] at (3.5,-1) {$\rightarrow$};
              \end{tikzpicture} \begin{tikzpicture}[scale=0.5]
				\draw[thick] (0,0) -- (1,0) -- (1,-2) -- (2,-2) -- (2,0) -- (3,0);
				\node[fill = white, draw = white] at (2.5,.5) {32};
				\node[fill = white, draw = white] at (1.5,-1.5) {};
				\node[fill = white, draw = white] at (.5,.5) {1};
             	\node[fill = white, draw = white] at (3.5,-1) {$\rightarrow$}; \end{tikzpicture}\begin{tikzpicture}[scale=0.5]
				\draw[thick] (0,0) -- (1,0) -- (1,-2) -- (2,-2) -- (2,0) -- (3,0);
				\node[fill = white, draw = white] at (2.5,.5) {2};
				\node[fill = white, draw = white] at (1.5,-1.5) {3};
				\node[fill = white, draw = white] at (.5,.5) {1};
             	\node[fill = white, draw = white] at (3.5,-1) {$\rightarrow$}; \end{tikzpicture}\begin{tikzpicture}[scale=0.5]
				\draw[thick] (0,0) -- (1,0) -- (1,-2) -- (2,-2) -- (2,0) -- (3,0);
				\node[fill = white, draw = white] at (2.5,.5) {};
				\node[fill = white, draw = white] at (1.5,-1.5) {3};
				\node[fill = white, draw = white] at (1.5,-.8) {2};
				\node[fill = white, draw = white] at (.5,.5) {1};
             	\node[fill = white, draw = white] at (3.5,-1) {$\rightarrow$}; \end{tikzpicture}\begin{tikzpicture}[scale=0.5]
				\draw[thick] (0,0) -- (1,0) -- (1,-2) -- (2,-2) -- (2,0) -- (3,0);
				\node[fill = white, draw = white] at (2.5,.5) {};
				\node[fill = white, draw = white] at (1.5,-1.5) {3};
				\node[fill = white, draw = white] at (.5,.5) {12};
             	\node[fill = white, draw = white] at (3.5,-1) {$\rightarrow$}; \end{tikzpicture}\begin{tikzpicture}[scale=0.5]
				\draw[thick] (0,0) -- (1,0) -- (1,-2) -- (2,-2) -- (2,0) -- (3,0);
				\node[fill = white, draw = white] at (.5,.5) {123};
              \end{tikzpicture}
\end{center}
\caption{The stack sorting algorithm at various points in the process of sorting the permutation 132. The ``well" drawn above is known as the \textit{stack}, which must stay increasing while read from top to bottom. This is an example of a permutation which is properly sorted by the stack sort algorithm. }
\end{figure}

\begin{definition}
Let $\sigma$ and $\tau$ be length $n$ words over the alphabet $[n] = \{1,2,\dots, n\}$. We say that $\sigma$ is order isomorphic to $\tau$, in symbols $\sigma \cong \tau$, if for all $i,j\leq n$ we have $$\sigma(i)<\sigma(j) \textrm{ if and only if } \tau(i)<\tau(j) $$  
and  $$\sigma(i)>\sigma(j) \textrm{ if and only if } \tau(i)>\tau(j).$$
\end{definition}

Recall that a permutation of length $n$ is a word of length $n$ with no repeated letters over the alphabet $[n]$. 

\begin{definition}
Let $\sigma$ be a word of length $n$ over the alphabet $[n] = \{1,2,\dots, n\}$ and $\tau$ a permutation of length $m$. We say that $\sigma$ \textit{contains} $\tau$ when $\sigma$ has a (not necessarily contiguous) subsequence which is order-isomorphic to $\tau$. 
Otherwise we say that $\sigma$ is $\tau$\textit{-avoiding}. We say that $\sigma$ is $T$\textit{-avoiding}, for a set $T$ of permutations, when $\sigma$ avoids every element of $T$. 

\end{definition}

For example, the permutations $132456$, $243561$ and $125643$ all contain $132$, while $45312$ does not.  It is well known that the number of permutations of length $n$ which avoid $\sigma\in S_3$, denoted $\av{\sigma}$, is counted by the $n$th Catalan number, $C_n$.

The stack sort map $s$, inspired by Knuth's stack sorting machine \cite{Knuth}, was first formalized by West in \cite{West}. This map has been widely studied, as have its generalizations. In 1968, Knuth proved (see \cite{Knuth}) that the set of permutations of length $n$ correctly sorted to the identity is $\av{231}$, thus showing that the number of permutations sorted to the identity by $s$ is enumerated by the Catalan numbers. Another way of viewing this stack sorting map is by viewing the stack as being required to avoid the permutation $21$ reading its contents from top to bottom. 

Recently Cerbai, Claesson, and Ferrari \cite{Cerbai} studied the properties of a map defined by sending a permutation through a stack which avoids a single permutation, then sending it through the stack sort map. Then, Baril, Cerbai, Khalil, and Vajnovszki \cite{Baril} studied a similar map, where instead of the first stack avoiding one permutation, it avoids two. They proved an enumeration of the set of permutations which are sorted to the identity by this map for two particular pairs of permutations. In a recent paper, Cerbai also explored the generalization to Cayley permutations \cite{CerbaiMachines}. Additionally, Cerbai et. al. prove more results about the $132$-avoiding stack in particular \cite{Cerbai132}. The vast majority of the literature has taken the approach of, for a particular stack sort style map, investigating which permutations are sorted to the identity. 

In this paper, we study a particular class of these stack sort generalizations, not as a means to sort permutations to the identity, but as functions in and of themselves. This has been done for the classical stack sort map, (see \cite{Images,claesson,ColinPreims,Fertility2,Fertility3,asymptotic}). In this paper, we study the more dynamical properties of these maps, first looking into the most straightforward question: Is the map bijective? We also classify the maximum number of preimages under the map, and classify the periodic points for a particular map.

In order to define our generalization, we first need some definitions.  The generalization we will be studying is the map $s_T$. 

\begin{definition}
Let $T$ be a set of permutations. The map $s_T$ sorts permutations (or words) according to the following algorithm: If adding the next element of the input to the stack keeps the stack $T$-avoiding, then move that element onto the stack. Otherwise, move the top element off the stack and append it to the output.
\end{definition}

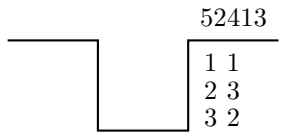
\begin{figure}[h]\begin{center}
\begin{tikzpicture}[scale=0.6]
				\draw[thick]  (4,0) -- (6,0) -- (6,-2) -- (8,-2) -- (8,0) -- (10,0);
				\node[fill = white, draw = white] at (9,.5) {52413};
			    \node[fill = white, draw = white] at (8.5,-1.7) {3};
			    \node[fill = white, draw = white] at (8.5,-1.1) {2};
			    \node[fill = white, draw = white] at (8.5,-.5) {1};
			    \node[fill = white, draw = white] at (9,-1.7) {2};
			    \node[fill = white, draw = white] at (9,-1.1) {3};
			    \node[fill = white, draw = white] at (9,-.5) {1};
              \end{tikzpicture}

\end{center} 
\caption{The map $s_T$ for $T = \{123,132\}$ before it sorts the permutation $52413$.}

\end{figure}
One might wonder if there are distinct sets of permutations $T\neq R$ such that $s_T = s_R$. 

\begin{definition}
Let $T$ be a set of permutations. We say that $T$ is \textit{reduced} if and only if there are no $\sigma,\tau\in T$ such that $\sigma$ contains $\tau$. 
\end{definition}

For distinct sets of reduced permutations $T$ and $R$ we have that $s_T$ and $s_R$ are distinct. To see this consider a permutation $\gamma$ of minimum length $n$ in $T\setminus R$. Then $s_T(\gamma^r) = \gamma(2)\gamma(1)\gamma(3)\cdots \gamma(n)$. In order for $s_R(\gamma^r)=\gamma(2)\gamma(1)\gamma(3)\cdots \gamma(n)$ to hold, $R$ would need to contain a permutation $\pi$ which is contained in $\gamma$. This contradicts the definition of reduced and so $s_t \neq s_R$.   It is also straightforward that for any set of permutations $T'$ there exists a reduced set $T$ such that $s_{T'} = s_{T}$. Thus, it is sufficient to only consider $s_T$ for reduced sets $T$.

In \cref{sec:bijSec}, we prove the following theorem, thereby classifying for which $T$ the map $s_T$ is a bijection. For any length $k$ permutation $\sigma$, let $\sigma(i)$ denote the $i$th entry of $\sigma$. Following \cite{Cerbai}, we let $\hat{\sigma}$ denote $\sigma(2)\sigma(1)\sigma(3)\cdots \sigma(k)$.

\begin{restatable}{theorem}{bijection}
\label{thm:bijection}
Let $T$ be a reduced set of permutations. The map $s_{T}$ is bijective if and only if for every $\sigma \in T$ we also have $\hat{\sigma}\in T$. In this case, the map $r \circ s_{T} \circ r$ is its inverse, where $r$ is the reverse operation.\end{restatable}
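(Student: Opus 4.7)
The plan is to prove each direction of the biconditional separately, deriving the inverse formula along the way.

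\textbf{Sufficiency.} Assuming $T$ is closed under $\sigma \mapsto \hat{\sigma}$, I will first establish a \emph{swap lemma}: if a stack $s_1 s_2 \cdots s_k$ (read top-to-bottom) avoids $T$, then so does $s_2 s_1 s_3 \cdots s_k$. The argument is a short case analysis on an alleged occurrence of some $\tau \in T$ in the swapped word. If the occurrence does not use both of positions $1$ and $2$, the same positions witness $\tau$ in the original word, contradicting $T$-avoidance. If it uses both, then the parallel positions in the original word form a $\hat{\tau}$ occurrence; but $\hat{\tau} \in T$ by hypothesis, again a contradiction.

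\textbf{The inverse formula.} Let $W$ be the sequence of push/pop operations produced by $s_T$ on $\pi$, and let $W'$ be the reverse of $W$ with $P$ and $Q$ swapped. I will show by induction on the step count $t$ that when $s_T$ is run on $r(s_T(\pi))$, the sequence of operations performed is exactly $W'$ and the stack at step $t$ equals the stack of the original computation at step $2n - t$. The push case is immediate: the resulting stack already occurred in the original computation, so it is $T$-avoiding. The pop case is where the swap lemma is used. The element of $r(s_T(\pi))$ that would be pushed is precisely the element that was popped at an appropriate earlier moment of the original computation; at that earlier moment a parallel push had been forbidden, and the swap lemma combined with the monotonicity of pattern containment under insertion promotes that forbidden configuration to the present stack. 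At completion the output is $r(\pi)$, so $r \circ s_T \circ r = s_T^{-1}$, and in particular $s_T$ is bijective.

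\textbf{Necessity.} Suppose $T$ is reduced but not closed. Pick $\sigma \in T$ of minimum length with $\hat{\sigma} \notin T$, and set $k = |\sigma|$. I first argue $\hat{\sigma}$ avoids $T$: if some $\tau \in T$ occurs in $\hat{\sigma}$, then either the occurrence fails to use both of positions $1, 2$ (so it restricts to an occurrence of $\tau$ in $\sigma$, contradicting reducedness) or it uses both (yielding a $\hat{\tau}$ occurrence in $\sigma$); reducedness together with the minimality of $|\sigma|$ then forces $\tau = \hat{\sigma} \in T$, a contradiction. Next, I simulate $s_T$ directly on $\sigma^r$ and on $\sigma(k)\sigma(k-1)\cdots\sigma(3)\sigma(1)\sigma(2)$ (that is, $\sigma^r$ with its last two entries swapped), and verify that both produce $\hat{\sigma}$. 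In the first simulation every intermediate stack during the initial $k-1$ pushes is a proper subsequence of $\sigma$ and hence $T$-avoiding by reducedness; the push of $\sigma(1)$ is blocked because it would reconstruct $\sigma$; after popping $\sigma(2)$, pushing $\sigma(1)$ and then flushing yields $\hat{\sigma}$. In the second simulation all $k$ pushes succeed, with the final push producing the stack $\hat{\sigma}$ (permitted because $\hat{\sigma}$ avoids $T$), and flushing gives $\hat{\sigma}$. These two inputs are distinct since $\sigma(1) \neq \sigma(2)$, so $s_T$ is not injective.

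The most delicate part is the pop case of the inductive step in the inverse formula: correctly identifying the ``next input'' of the reverse computation, matching it to the appropriate popped element of the original computation, and invoking the swap lemma at the correct stack configuration. The remaining bookkeeping (aligning indices between the two computations and handling boundary cases when a stack or input becomes empty) is routine but intricate.
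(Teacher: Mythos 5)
Your proof is correct, and the sufficiency direction takes a genuinely different route from the paper. The paper proves that $r\circ s_T\circ r$ inverts $s_T$ by induction on the length of $\pi$, using the $T$-clumping decomposition and the recursion $s_T(a_0\cdots a_k)=a_{k-1}^r\,s_T(a_0\cdots a_{k-2}a_k)$ of \cref{thm:recursion}; the closure hypothesis enters when arguing that any preimage of $a_k^ra_{k-2}^r\cdots a_0^r$ would otherwise exhibit a $\hat\sigma$-pattern contradicting the clumping, and again when showing $a_k(1)$ must exit before $a_{k-1}(1)$ enters because $\hat\sigma^r$ occurs. Your argument instead isolates the closure hypothesis in a single local \emph{swap lemma} and then runs the entire push/pop sequence backwards, showing the computation on $r(s_T(\pi))$ is the time-reversal of the computation on $\pi$. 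This is arguably more transparent about \emph{why} $r\circ s_T\circ r$ is the inverse, and it avoids the clumping machinery entirely; the paper's route has the advantage of reusing \cref{thm:recursion}, which it needs elsewhere anyway. The one delicate point you flag --- the pop case of the induction --- does go through: between the relevant pop of the original computation and the mirrored moment of the reverse computation only pushes occur, so the blocked configuration $wzS^*$ embeds (after applying the swap lemma to exchange $z$ with the entry above it) as a subsequence of the new candidate stack, and monotonicity of containment finishes it. Your necessity argument is essentially the paper's (both inputs $\sigma^r$ and $(\hat\sigma)^r$ map to $\hat\sigma$), but you are more careful on one point the paper glosses over: the paper concludes $s_T((\hat\sigma)^r)=\hat\sigma$ from $\hat\sigma\notin T$ alone, whereas one actually needs $\hat\sigma$ to avoid all of $T$; your choice of $\sigma$ of minimal length among counterexamples, combined with reducedness, supplies exactly this.
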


A corollary of \cref{thm:bijection} answers a question of Baril, Cerbai, Khalil, and Vajnovszki \cite{Baril} about the $(\sigma,\tau)$-machine. 

\begin{definition}
Let $\sigma$ and $\tau$ be length $n$ permutations. The $(\sigma,\tau)$-machine is the map $s \circ s_{\{\sigma,\tau\}}$.
\end{definition}
 
\begin{figure}[h]\begin{center}
\begin{tikzpicture}[scale=0.6]
                \draw[->] (8,1) -- (2,1); 
				\draw[thick] (0,0) -- (2,0) -- (2,-2) -- (4,-2) -- (4,0) -- (6,0) -- (6,-2) -- (8,-2) -- (8,0) -- (10,0);
				\node[fill = white, draw = white] at (9,.5) {52413};
			    \node[fill = white, draw = white] at (4.5,-1.5) {1};
			    \node[fill = white, draw = white] at (4.5,-.8) {2};
			    \node[fill = white, draw = white] at (8.5,-1.7) {2};
			    \node[fill = white, draw = white] at (8.5,-1.1) {3};
			    \node[fill = white, draw = white] at (8.5,-.5) {1};
			    \node[fill = white, draw = white] at (9,-1.7) {2};
			    \node[fill = white, draw = white] at (9,-1.1) {1};
			    \node[fill = white, draw = white] at (9,-.5) {3};
              \end{tikzpicture}

\end{center}
\caption{The $(132,312)$-machine about to sort the permutation 52413.}
\end{figure}
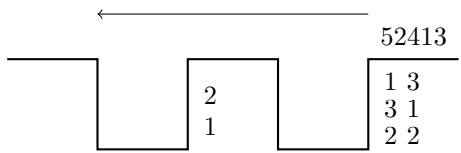

  Cerbai, Claesson, and Ferrari \cite{Cerbai} studied a version of this map $s \circ s_{\{\sigma\}}$ for various $\sigma$. The $(\sigma,\tau)$-machine was explored by Baril, Cerbai, Khalil, and Vajnovszki \cite{Baril}. In their paper, they categorize which permutations are sorted to the identity by the $(123,132)$-machine and the $(132,231)$-machine.

We denote the set of permutations of length $n$ sorted to the identity by the $(\sigma,\tau)$-machine by $\sort_n(\sigma,\tau)$. \cref{thm:bijection} results in the following corollary:

\begin{restatable}{cor}{machineCorollary}
Let $\sigma$ be a permutation. Then $|\sort_n(\sigma,\hat{\sigma})| = C_n$ where $C_n$ is the $n$th Catalan number.\end{restatable}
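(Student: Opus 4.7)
The plan is to derive the corollary as a direct consequence of \cref{thm:bijection} combined with Knuth's classical enumeration.

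First I would verify that $T := \{\sigma, \hat{\sigma}\}$ satisfies the hypotheses of \cref{thm:bijection}. Since $\sigma$ and $\hat{\sigma}$ are distinct permutations of the same length (their first two entries are genuinely swapped, so they cannot coincide as long as $|\sigma| \geq 2$), and since permutations of equal length cannot contain one another unless they are equal, $T$ is reduced. Moreover, the hat operation is an involution ($\hat{\hat{\sigma}} = \sigma$), so $T$ is closed under the hat operation. Hence \cref{thm:bijection} applies and $s_T$ is a bijection on the set of length-$n$ permutations.

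Next I would use this bijectivity to reduce the count to the classical stack sort. By definition, $\pi \in \sort_n(\sigma, \hat{\sigma})$ precisely when $s(s_T(\pi)) = \id_n$. Because $s_T$ restricts to a bijection on $S_n$, composition with $s_T$ yields
\[ |\sort_n(\sigma,\hat{\sigma})| = |\{\pi \in S_n : s(s_T(\pi)) = \id_n\}| = |\{\rho \in S_n : s(\rho) = \id_n\}|. \]
Knuth's theorem identifies the right-hand side with $|\av{231}| = C_n$, completing the argument.

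There is essentially no obstacle: the only subtlety is confirming that $\{\sigma,\hat\sigma\}$ is reduced and hat-closed so that \cref{thm:bijection} can be invoked, and that $s_T$ preserves length (immediate from the algorithm, since every input letter is eventually appended to the output). Everything else is a one-line application of a bijection followed by a citation to Knuth.
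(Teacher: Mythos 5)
Your proof is correct and follows essentially the same route as the paper: invoke \cref{thm:bijection} to see that $s_{\{\sigma,\hat\sigma\}}$ is a bijection, then pull back Knuth's count $|s^{-1}(\id_n)| = C_n$. The only difference is that you explicitly verify that $\{\sigma,\hat\sigma\}$ is reduced and hat-closed, a check the paper leaves implicit.
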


In particular, $\sort(132,312)$, $\sort(231,321)$ and $\sort(123,213)$ are enumerated by the Catalan numbers. This answers a question of Baril, Cerbai, Khalil, and Vajnovszki \cite{Baril}. Along with Baril, Cerbai, Khalil, and Vajnovszki's result that $\sort(132,123)$ is enumerated by the Catalan numbers, our result fully classifies for which pairs $\sigma,\tau$ of length 3 permutations we have $\sort_n(\sigma,\tau) = C_n$ 

An alternative proof of \cref{machineCorollary} was given at Permutation Patterns 2020 by Cerbai. 

 Properties of the \textit{fertility} of a permutation, or the size of its preimage under the stack sort map have been explored in various papers \cite{Fertility,Fertility2,Fertility3,ColinLassalle}. There is a strong link between fertilities of permutations and cumulants in noncommutative probability theory \cite{ColinLassalle, asymptotic}.  Problems regarding fertilities of permutations have been well studied in the stack sorting case. It is a classical result that the maximum fertility of a length $n$ permutation is simply $C_n$. In \cref{sec:preims}, we generalize this result to $s_T$, proving the following.

\begin{theorem}
If $T$ is a set of permutations, all of length at least $k$, then every permutation of length $n>k$ has at most $C_{n-k+2}$ preimages under the map $s_{T}$.

\end{theorem}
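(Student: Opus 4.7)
The plan is to injectively embed $s_T^{-1}(\pi)$ into the set of Dyck paths of length $2(n-k+2)$, after which the bound is immediate from $C_{n-k+2}$ being the count of such paths. For each preimage $\sigma$, record the push/pop history of the $s_T$-algorithm on $\sigma$ as a Dyck path $D(\sigma)\in\{P,O\}^{2n}$, where the $i$-th letter is $P$ when the algorithm pushes at step $i$ and $O$ when it pops. The value pushed at the $j$-th push equals the value of $\pi$ at the index of the future pop that removes it, so the pair $(\pi,D(\sigma))$ determines $\sigma$; thus $\sigma\mapsto D(\sigma)$ is injective.

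The next step is to extract two structural constraints on $D(\sigma)$. Since every pattern in $T$ has length $\geq k$, any stack of size less than $k$ trivially avoids $T$, forcing the first $k-1$ operations to be pushes. Conversely, any pop that takes place while the input is non-empty must be triggered by a would-be push producing a $T$-pattern, which requires the stack already to hold at least $k-1$ elements. A short induction on step number then shows that throughout ``Phase 1'' (the interval up to and including the $n$-th push) the stack size stays $\geq k-2$ after the initial ascent; in particular, the stack has size $\geq k-1$ just after the final push, so the last $k-1$ operations of $D(\sigma)$ are all pops.

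Now write $D(\sigma)=P^{k-1}\cdot M\cdot O^{k-1}$ and define $D'$ by deleting the first $k-2$ $P$'s of the initial ascent and the last $k-2$ $O$'s of the final descent, then shifting all remaining heights down by $k-2$. The resulting lattice path has length $2(n-k+2)$ and begins and ends at height $0$; its non-negativity reduces to showing that the original heights stay $\geq k-2$ on the retained time window, which combines the Phase 1 invariant with the observation that the Phase 2 descent (all $O$'s after the $n$-th push) is monotone and first reaches $k-2$ exactly at the left endpoint of the terminal $O^{k-1}$ block. Hence $D'$ is a genuine Dyck path, and $\sigma\mapsto D'$ injects $s_T^{-1}(\pi)$ into the $C_{n-k+2}$ Dyck paths of length $2(n-k+2)$.

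The principal obstacle is the Phase 1 invariant and the forced suffix $O^{k-1}$ it produces; without these, the shrinkage could yield a path that dips below zero, and the count would overshoot $C_{n-k+2}$. Once the invariant is in place, verifying the heights after shifting is essentially bookkeeping. A useful sanity check is $k=2$, where the shrinkage removes nothing and the argument reproduces the classical bound $C_n$ on the fertility of the usual stack-sort map.
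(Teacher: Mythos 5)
Your proposal is correct and follows essentially the same route as the paper: encode each preimage of a fixed $\pi$ by its push/pop (movement) sequence, note that injectivity holds because $\pi$ together with the sequence lets one reconstruct the input, observe that patterns of length at least $k$ force the path to stay at height at least $k-2$ away from its forced initial ascent and terminal descent, and strip those $k-2$ steps from each end to land in the Dyck paths of semilength $n-k+2$. Your write-up is somewhat more explicit about the height invariant than the paper's, but the decomposition and counting are identical.
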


We also prove for the case when $T$ is of size one exactly when this bound is attained. 

\begin{theorem}
Let $\sigma$ be a length $k$ permutation. The maximum number of preimages a length $n$ permutation has under $s_{\{\sigma\}}$ is $C_{n-k+2}$ if and only if $\sigma(1)$ and $\sigma(2)$ are consecutive integers.
\end{theorem}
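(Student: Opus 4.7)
The plan is to analyze preimages through the push--pop sequences executed by the algorithm, building on the framework behind the previous theorem's bound. Each preimage of $\tau$ under $s_{\{\sigma\}}$ corresponds to a distinct schedule of push and pop operations producing $\tau$ as output, so tightness of the $C_{n-k+2}$ bound for a given $\tau$ reduces to realizing that many schedules that are simultaneously valid (the stack remains $\sigma$-avoiding throughout) and consistent with the same output.

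For the ``if'' direction, assume without loss of generality that $\sigma(2) = \sigma(1) + 1$; the symmetric case $\sigma(2) = \sigma(1) - 1$ will be handled by the reverse involution, which interacts predictably with $s_T$ as in \cref{thm:bijection}. I would construct an explicit length-$n$ permutation $\tau$ together with a family of $C_{n-k+2}$ preimages in bijection with Dyck paths of size $n-k+2$. The consecutive hypothesis enters crucially in showing that each Dyck path is actually realizable: at a step where the schedule demands a push, the stack must remain $\sigma$-avoiding, and the fact that $\sigma(1)$ and $\sigma(2)$ are consecutive integers means that swapping them in a stack configuration cannot create or destroy a $\sigma$-pattern, giving the needed flexibility to realize every Dyck path. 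Small cases like $s_{\{213\}}$, where $\hat\sigma = 123$ attains the maximum fertility among length-$3$ outputs, suggest the witness $\tau$ can be built around $\hat\sigma$ followed by an increasing tail.

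For the ``only if'' direction, assume $|\sigma(1) - \sigma(2)| \geq 2$, and fix an integer $c$ strictly between them. I plan to argue that no length-$n$ permutation can achieve $C_{n-k+2}$ preimages. The key obstruction is that whenever the algorithm places $c$ on the stack during a process meant to produce a candidate $\tau$, the presence of $c$ rigidifies the push/pop decision at a step where, in the consecutive case, the choice would have been genuinely free. Concretely, I expect to identify two schedules that would yield the same output $\tau$ in the consecutive setting but which, under the non-consecutive hypothesis, collapse to a single schedule because one of them necessarily violates $\sigma$-avoidance at the moment $c$ interacts with $a = \sigma(1)$ or $b = \sigma(2)$ on the stack.

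The hard part will be the converse direction, since one must rule out \emph{all} possible targets $\tau$ simultaneously, not merely a natural candidate. The argument will likely require pinpointing, for each $\tau$, a specific pair of would-be preimages whose collision under the non-consecutive hypothesis strictly decreases the count. A careful case analysis on the relative order of $\sigma(1)$, $\sigma(2)$, and the chosen intermediate $c$, together with a uniform description of where $c$ sits in the stack across schedules, seems unavoidable.
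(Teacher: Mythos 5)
Your overall framework---counting preimages via push--pop schedules and identifying schedules with Dyck paths of semilength $n-k+2$---is exactly the paper's, and your ``if'' direction is headed the right way, though it is only a sketch. The paper's actual witness is built not around $\hat\sigma$ but around a \emph{literal} copy of $\sigma(3)\cdots\sigma(k)$ (the word $\kappa(\sigma)$) planted so that it sits at the bottom of the stack for the whole process; the point of consecutiveness is then that \emph{any} descent among the remaining $n-k+2$ free values, stacked above this copy, completes an occurrence of $\sigma$, because no third value is required to fit strictly between the two descent entries. The stack therefore behaves exactly like the classical $21$-avoiding stack on those free values, and Knuth's theorem hands you all $C_{n-k+2}$ preimages (the $231$-avoiding tails), with \cref{lem: complement} covering the case $\sigma(1)<\sigma(2)$. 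Your stated mechanism (``swapping $\sigma(1)$ and $\sigma(2)$ cannot create or destroy a $\sigma$-pattern'') is not what does the work, but the gap here is one of execution rather than of approach.

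The genuine gap is in the converse. You frame it as a per-target problem---for each output $\tau$, find two schedules that ``collapse''---and you concede the resulting case analysis is unavoidable; carried out that way, the argument would be very hard and you have not carried it out. The idea you are missing is that no per-target analysis is needed: the proof of \cref{bound} already shows that distinct preimages of the \emph{same} permutation must use distinct movement sequences, so it suffices to exhibit a single legal movement sequence that is unrealizable by \emph{any} input of length $n>k$. The paper kills the fully alternating schedule $N^{k-2}(NX)^{n-k+2}X^{k-2}$: realizing it forces $\pi(i+1)\pi(i)\pi(k-2)\cdots\pi(1)\cong\sigma$ for every $i$ with $k-2<i\le n-1$, and if some $\sigma(j)$ lies strictly between $\sigma(1)$ and $\sigma(2)$, comparing the constraints at $i=k-1$ and $i=k$ yields both $\pi(k)<\pi(k+1-j)$ and $\pi(k)>\pi(k+1-j)$, a contradiction. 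This removes one of the $C_{n-k+2}$ available schedules uniformly over all targets and finishes the proof in a few lines. Your instinct that the intermediate value $c$ is the obstruction is correct, but without the ``one dead schedule suffices'' reduction your plan does not close.
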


In \cref{sec:periodicity}, we look into the periodic points of the map $s_T$ for the case when $T = \{213,\tau\}$ for non-identity $\tau$ of length at least 3.

\section{Bijectivity of \texorpdfstring{$s_T$}{map} and the \texorpdfstring{$(\sigma,\tau)$}{(sigma,tau)}-machine} \label{sec:len3}\label{sec:bijSec}

The goal of this section is to prove \cref{thm:bijection}.

For any $\pi\in S_n$ we will let $\pi^r$ denote $\pi(n) \cdots \pi(1)$. Let $r:S_n\to S_n$ be such that $r(\pi) = \pi^r$. Let $T^r$ denote the set $\{\sigma^r: \sigma \in T\}$

\begin{definition}
For permutations $\sigma$ and $\tau$ we say that $\sigma<\tau$ \textit{colexicographically} if $\sigma^r>\tau^r$ lexicographically. 
\end{definition}

\begin{definition}
Let $T$ be a set of permutations and $\pi$ a permutation of length $n$ such that $\pi^r$ is not $T$-avoiding. We call a tuple $(a_0,\dots, a_{k})$,  of contiguous subsequences of $\pi$, the \textit{$T$-clumping} of $\pi$ if the following holds: \begin{enumerate}
    \item $a_0\cdots a_{k} = \pi$
    \item The colexicographically least element of the set $$\{(i_1,\dots,i_k): 1\leq i_1 < \cdots < i_k\leq n \textrm{ and } \pi(i_1) \cdots \pi(i_k) \cong \sigma^r \textrm{ for some }\sigma \in T\}$$
    is the tuple of indices of $a_1(1),\dots, a_k(1)$ in $\pi$. 
\end{enumerate}   
In other words, a $T$-clumping is the decomposition of a permutation $\pi$ induced by the indices of the leftmost occurrence of a pattern $\sigma \in T^r$ in $\pi$, with respect to the colexicographical order of indices.
\end{definition}

Note that any permutation which is not $T^r$-avoiding has a unique $T$-clumping. For instance if $T = \{123,132\}$ Then the $T$-clumping of $731426$ is $a_0 = 73$, $a_1 = 1$, $a_2 = 4$, $a_3 = 26$.

\begin{theorem} \label{thm:recursion}
Let $T$ be a set of permutations. Then $s_{T}$ satisfies the following recurrence:

If $\pi$ is a permutation or word avoiding $T^r$, then $s_{T}(\pi) = \pi^r$. 

Otherwise, let $(a_0,\dots, a_{k})$ be the $T$-clumping of $\pi$. Then $$s_{T}(\pi) = s_{T}(a_0\cdots a_{k}) = a_{k-1}^r s_{T}(a_0\cdots a_{k-2} a_{k}).$$
\end{theorem}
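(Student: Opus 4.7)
The plan is to simulate $s_T$ on $\pi$ and, in the non-avoiding case, identify the first ``pop phase'', showing it removes exactly the block $a_{k-1}$ and outputs it in reverse. The base case is straightforward: if $\pi$ avoids $T^r$, then at every moment the stack contents read bottom-to-top form a subsequence of $\pi$, hence are also $T^r$-avoiding; equivalently, read top-to-bottom the stack is $T$-avoiding, so no push ever triggers a pop. Every element is pushed in order and then popped off at the end, producing output $\pi^r$.

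For the recursive case, the colex-least tuple $(i_1,\dots,i_k)$ has two minimality properties to exploit: $i_k$ is the smallest possible last index of a $T^r$-match in $\pi$, and, given $i_k$, the entry $i_{k-1}$ is the smallest possible second-to-last index of a $T^r$-match ending at $i_k$. From these I will establish: (i) for every $j<i_k$, pushing $\pi(j)$ triggers no pop, because the only new forbidden pattern after pushing must involve the newly pushed element and hence would correspond to a $T^r$-match in $\pi$ ending at $j<i_k$, contradicting the minimality of $i_k$; so just before step $i_k$ the stack is $\pi(1)\cdots\pi(i_k-1)$. (ii) At step $i_k$ the algorithm pops exactly $\pi(i_k-1),\pi(i_k-2),\dots,\pi(i_{k-1})$ in that order---contributing $a_{k-1}^r$ to the output---and then successfully pushes $\pi(i_k)$. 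The ``forced pop'' direction of (ii) is easy: while the stack top is $\pi(m)$ with $m\geq i_{k-1}$, the elements $\pi(i_1),\dots,\pi(i_{k-1})$ still remain in the stack, so pushing $\pi(i_k)$ recreates the original $T^r$-match and forces another pop.

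The main obstacle is verifying that after popping down to stack $\pi(1)\cdots\pi(i_{k-1}-1)$, the push of $\pi(i_k)$ is safe---that is, no $T^r$-match exists in $\pi(1)\cdots\pi(i_{k-1}-1)\pi(i_k)$. A match avoiding $\pi(i_k)$ would give a $T^r$-match in $\pi$ ending before $i_k$, violating minimality of $i_k$. A match involving $\pi(i_k)$ would give a $T^r$-match in $\pi$ ending at $i_k$ whose second-to-last index is at most $i_{k-1}-1<i_{k-1}$, violating minimality of $i_{k-1}$. The delicate point is that $T$ may contain patterns of different lengths, but because colex comparison proceeds from the rightmost entry inward, any strictly smaller second-to-last index already produces a strictly colex-smaller tuple, regardless of the tuple's overall length, so the minimality argument applies uniformly across $T$.

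Finally, with (i) and (ii) in place, the configuration after the first pop phase---stack $\pi(1)\cdots\pi(i_{k-1}-1)\pi(i_k)$, remaining input $\pi(i_k+1)\cdots\pi(n)$, output $a_{k-1}^r$---coincides with the configuration reached by running $s_T$ on $\pi':=a_0 a_1\cdots a_{k-2}a_k$ after its first $i_{k-1}$ input steps; applying (i) and (ii) to $\pi'$ confirms that these steps are pop-free and terminate with $\pi(i_k)$ pushed on top. Both computations then evolve identically on the same remaining input, yielding $s_T(\pi)=a_{k-1}^r\cdot s_T(a_0\cdots a_{k-2}a_k)$, which closes the recursion.
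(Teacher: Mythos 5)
Your proof is correct and follows essentially the same route as the paper's: simulate $s_T$, show the stack fills pop-free up to $a_0\cdots a_{k-1}$ by minimality of $i_k$, that the first pop phase ejects exactly $a_{k-1}$ in reverse, and that the resulting configuration coincides with that reached when sorting $a_0\cdots a_{k-2}a_k$. You are somewhat more explicit than the paper about why the popping stops precisely at $a_{k-1}(1)=\pi(i_{k-1})$ and why $\pi(i_k)$ can then safely enter (via colex-minimality of $i_{k-1}$, uniformly over pattern lengths in $T$), a point the paper leaves largely implicit.
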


\begin{proof}

Assume $\pi$ does not avoid $T$. If $\pi = a_0\cdots a_{k}$ where $(a_0,\dots, a_{k})$ is a $T$-clumping, then when $a_0\cdots a_{k}$ is sorted by $s_{T}$, first $a_0\cdots a_{k-1}$ enters the stack since $a_0\cdots a_{k-1}$ is $T^r$-avoiding, by definition of a $T$-clumping. Then since $a_{1}(1)\cdots a_{k}(1)$ forms a $\sigma^r$ for some $\sigma \in T$, before $a_{k}(1)$ can enter the stack, first $a_{k-1}$ must leave the stack. Thus, $a_{k-1}^r$ gets appended to the (currently empty) output and $a_0,\dots,a_{k-2}$ is still in the stack, while $a_{k}$ is still part of the input. Now, consider the situation in which we are inputting $a_0\cdots a_{k-2}a_{k}$ into $s_{T}$. Since we know that $a_0\cdots a_{k-2}$ is $T^r$-avoiding, $a_0\cdots a_{k-2}$ enters the stack. Here we are in the same situation as previously, but without the $a_{k-1}^r$ appended to the output. Thus, in the situation of inputting $a_0\cdots a_{k}$, we end up with $a_{k-1}^r$ followed by the permutation obtained from applying $s_T$ to $a_0\cdots a_{k-2}a_{k}$. Thus, $s_{T}$ satisfies the recurrence. 
\end{proof}

Recall that for a permutation $\sigma$ of length $k$, we let $\hat{\sigma}$ denote the permutation $\sigma(2)\sigma(1)\sigma(3)\cdots \sigma(k)$.

\bijection*

For convenience, we let $R_{T} =r \circ s_{T} \circ r $. 

\begin{proof}
Assume that $\hat{\sigma}\in T$ for every $\sigma\in T$. We show that $R_T$ is the inverse of $s_T$. We will proceed by induction on $n$, the length of our permutation or word $\pi$.

Let $m$ be the minimum length of a permutation in $T$. For our base case, we let $\pi$ be a word of length $m-1$ or fewer. Then $\pi$ must avoid $T^r$ so $s_{T}\circ R_{T} (\pi)=s_{T}(\pi^r) = \pi $. 

Assume that for all permutations or words $\rho$ of length less than $n$,  $s_{T}\circ R_{T} (\rho)= \rho $. Let $\pi$ be a permutation or word of length $n$. If $\pi$ is $T^r$-avoiding then $s_{T}\circ R_{T} (\pi)=s_{T}(\pi^r) = \pi $. Assume $\pi$ is not $T^r$-avoiding.  Let $(a_0,\dots,a_{k})$ be the $T$-clumping of $\pi$. Let $\sigma \in T$ be so that $a_1(1)\cdots a_{k}(1)$ is an occurrence of $\sigma^r$.  Then
\begin{align*}
    R_T\circ s_T(\pi)  &= R_T\circ s_T(a_0\cdots a_{k})\\
    &= R_T( a_{k-1}^rs_T(a_0\cdots a_{k-2} a_{k}))\\
    & =r \circ s_T \circ r( a_{k-1}^rs_T(a_0\cdots a_{k-2} a_{k}))\\
    &  = r \circ s_T \left[ (s_T(a_0\cdots a_{k-2}a_{k}))^ra_{k-1}\right]\\
   & =r \circ s_T ( s_T^{-1}(a_{k}^r a_{k-2}^r\cdots a_0^r)a_{k-1}).
\end{align*}

Note that it makes sense to write $s^{-1}_T$ since the map is guaranteed to be bijective on words of length less than $n$ by our induction hypothesis. The last equality is by our induction hypothesis.

Now let's restrict our attention to $s_{T} ( s_{T}^{-1}(a_{k}^r a_{k-2}^r\cdots a_0^r)a_{k-1})$. Note that by the definition of a $T$-clumping, $a_0,\dots,a_{k-2}a_{k}(1)$ must avoid $T^r$. Thus, in order to be sorted to $a_{k}^r a_{k-2}^r\cdots a_0^r$ by $s_T$, when $s_{T}^{-1}(a_{k}^r a_{k-2}^r\cdots a_0^r)$ is sorted,  $a_0,\dots,a_{k-2}a_{k}(1)$ must be at the bottom of the stack when there are no more elements to enter the stack. 

To see this, assume for the sake of contradiction that we have a permutation $\alpha$ such that $s_T(\alpha)$ ends with $a_{k}^r a_{k-2}^r\cdots a_0^r$, but in the process of sending $\alpha$, through the stack, we don't have $a_0,\dots,a_{k-2}a_{k}(1)$ in the bottom of the stack when there are no more elements to enter the stack. This would mean $\alpha$ had to end with some $\gamma\neq a_0,\dots,a_{k-2}a_{k}(1)$ which gets sorted to $a_{k}^r a_{k-2}^r\cdots a_0^r$ by $s_T$. However, since $s_T$ couldn't simply reverse $\gamma$ we would have that $\gamma$ contains some $\sigma \in T^r$. But then, $s_T(\gamma)$ would contain an occurrence of $\hat{\sigma}$. Since $\hat{\sigma}\in T$, this contradicts the definition of a $T$-clumping. 

Thus, when inputting 
$s_{T}^{-1}(a_{k}^r a_{k-2}^r\cdots a_0^r)a_{k-1}$ into the map $s_T$, at the point that $a_{k-1}(1)$ tries to enter the stack, we have that an initial segment of $a_{k}^r$ is appended to the output, and the rest of $a_{k}^r$ is at the top of the stack followed by $a_{k-2}^r,\dots,a_0^r$ at the bottom. Since $a_{2}(1)\cdots a_k(1) $ is an occurrence of $\sigma^r$, $a_2(1),\dots a_{k-2}(1) a_{k}(1) a_{k-1}(1)$ is an occurrence of $\hat{\sigma}^r$, so $a_k(1)$ and everything above it must leave the stack before $a_{k-1}(1)$ can enter. Because $a_0\cdots a_{k-1}$ must be $T$-avoiding, the rest of $a_{k-1}$ can be added to the stack. Once this happens we have $a_k^r$ as part of the output and $a_{k-1}^r \cdots a_0^r$ in the stack. Since there is nothing left unsorted, the stack pops out to form $a_k^r \cdots a_0^r$.

Thus, we have that $$r \circ s_T ( s_T^{-1}(a_k^r a_{k-2}^r\cdots  a_0^r)a_{k-1})  = r (a_k^r \cdots a_1^r) = a_0\cdots a_k = \pi.$$ 

\noindent So, $R_T = s_T^{-1}$ and $s_T$ is bijective. 

For the converse, let $\sigma\in T$ be a permutation of length $k$ such that $\hat{\sigma}\not \in T$. Consider the permutations $\sigma^r$ and $(\hat{\sigma})^r$. When $\sigma^r$ gets sorted, we have that $\sigma(k),\dots,\sigma(2)$ can enter the stack while keeping the stack $T$-avoiding, since $T$ is reduced. However, $\sigma(2)$ must leave the stack before $\sigma(1)$ enters. Thus, the permutation gets sorted to $\sigma(2)\sigma(1)\sigma(3)\cdots \sigma(k) = \hat{\sigma}$.  Since $\hat{\sigma}\not \in T$,  we have that $s_{T}((\hat{\sigma})^r) = \hat{\sigma}$. Thus the map $s_T$ is not bijective. 
\end{proof}

Recall that for a permutation $\sigma$, the $(\sigma,\hat{\sigma})$-machine is equal to s $\circ s_{\{\sigma,\hat{\sigma}\}}$ and that $\sort_n(\sigma,\hat{\sigma})$ is the preimage of the length $n$ identity under that map. 

\cref{thm:bijection} results in the following corollary.

\machineCorollary*

\begin{proof}
The $(\sigma,\hat{\sigma})$-machine is equal to  $s\circ s_{\{\sigma,\hat{\sigma}\}}$. It was proved by Knuth that $|{s}^{-1}(\id_n)| = C_n$ in \cite{Knuth}. Here we write $\id_n$ to denote the identity permutation of $S_n$. Thus, since $s_{\{\sigma,\hat{\sigma}\}}$ is bijective, we have  
$$|s_{\{\sigma,\hat{\sigma}\}}^{-1}( s^{-1}(\id_n)) | = |s^{-1}(\id_n)| = C_n .\qedhere$$ \end{proof}
The following table gives the first few values for the cardinality of  sort$(\sigma,\tau)$ for each possible pair  $(\sigma,\tau)$ of length three. The sequences which are equal to the Catalan sequence are underlined. 

\begin{center}
\begin{tabular}{ |c|c||c|c|| c| c| } 
 \hline
\underline{$(123,132)$} & {1  2  5  14} & $(123,321)$ & 1  2  4  7  & $(213,231)$ & 1 2 5 16\\
 \underline{$(123,213)$} & {1  2  5  14} & $(132,213)$ & 1 2 5 15  & $(213,321)$& 1 2 4 12\\
  $(123,231)$ & 1  2  6  21  & \underline{$(132,312)$} & {1  2  5  14} &$(231,312)$& 1 2 6 23 \\
 $(123,231)$ & 1  2  6  21 & $(132,321)$ & 1 2 4 10 &\underline{$(231,321)$} & {1  2  5  14}\\
  $(123,312)$ & 1  2  5  15  & $(213,231)$ & 1 2 6 23& $(312,321)$& 1 2 4 10\\
 \hline
\end{tabular}
\end{center}

Thus, we may observe that for all length three patterns $(\sigma,\tau)$ other than $(123,132)$, $(123,213)$, $(132,312)$, and $(231,321)$, sort$(\sigma,\tau)$ is not counted by the Catalan numbers. Baril, Cerbai, Khalil, and Vajnovszki \cite{Baril}, proved that sort$(123,132)$ is counted by the Catalan numbers. The other three cases are proved by Corollary 2.1. Thus, we now have a full classification of the pairs of length 3 permutations $\sigma,\tau$ for which sort$(\sigma,\tau)$ is  enumerated by the Catalan numbers.

\section{ Preimages of the map \texorpdfstring{$s_{\{\sigma\}}$}{(tau)} } \label{sec:preims}

As in the solution of exercise 19 from Chapter 2 of \cite{Bona}, we will define the notion of \textit{movement sequences}, but now generalized to $s_{T}$.

Note that at any point in the process of sorting using a map $s_T$, two things can happen: the next entry can enter the stack, or the top element of the stack can exit the stack. 
\begin{definition}\label{movement-sequence}
The \textit{movement sequence} of a permutation under a stack sorting map $s_{T}$ is the sequence of steps the permutation follows as it gets sorted. We will use $N$ to denote the enter step, and $X$ to denote the exit step. 

Recall the notion of a Dyck word. A word $w$ from the alphabet $\{U,D\}$ is a \textit{Dyck word} if for any $i \leq \textrm{len} (w)$ we have that $U$ appears more times than $D$ in the substring $w(1)\cdots w(i)$. 

The standard visualization for Dyck words is as a Dyck path. Given a word $w$ on the alphabet $\{U,D\}$, to obtain a path one starts at the origin and at step $i$ moves diagonally up and left if $w(i) = u$ and diagonally down and left if $w(i)$. Dyck words correspond to Dyck paths which do not travel lower than the starting point.
 
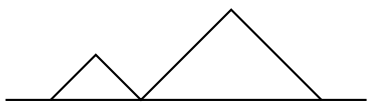
\begin{figure}[h]\begin{center}
\begin{tikzpicture}[scale=0.6]
				\draw[thick]  (0,0) -- (1,1) -- (2,0) -- (4,2) --  (6,0);
				\draw[thick]  (-1,0) --   (7,0);
              \end{tikzpicture}
\end{center} 
\caption{The Dyck path corresponding to $UDUUDD$.}

\end{figure}
\end{definition}

\begin{theorem}\label{bound}
If $T$ is a set of permutations, all of length at least $k$, then every permutation of length $n>k$ has at most $C_{n-k+2}$ preimages under the map $s_{T}$.
\end{theorem}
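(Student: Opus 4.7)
The plan is to bound the number of preimages of $\pi$ by counting valid movement sequences, using the fact that, given $\pi$, distinct preimages yield distinct movement sequences. The key constraint imposed by $T$-avoidance is that, since every element of $T$ has length at least $k$, any stack of size at most $k-1$ automatically avoids $T$. Consequently, in a movement sequence arising from a preimage, an $X$-move can occur only if either the stack already contains at least $k-1$ elements or the input is empty; otherwise $s_{T}$ would push the next input element rather than pop. This immediately forces the first $k-1$ moves to be $N$'s, and a short argument then shows that from position $k-1$ up to the position $T'$ of the last $N$-move, the height never falls below $k-2$: whenever it reaches $k-2$ the next move is forced to be $N$, returning the height to $k-1$.

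Next I would construct a bijection between valid movement sequences of length $2n$ and Dyck words of length $2(n-k+2)$. Given a valid movement sequence $M$, the height bound above gives $T' \leq 2n-k+1$, so at least $k-1$ $X$-moves follow position $T'$. Form $\tilde M$ by deleting the first $k-2$ moves of $M$ (all $N$'s) together with the $k-2$ $X$'s immediately following position $T'$. The result is a Dyck word of length $2(n-k+2)$: on the portion before position $T'$ the heights shift down by $k-2$ but remain non-negative thanks to the height bound, on the portion after $T'$ they are unchanged, and the two pieces splice consistently at the boundary. The inverse map takes a Dyck word $\tilde P$ of semilength $n-k+2$, prepends $k-2$ $N$'s, and inserts $k-2$ $X$'s immediately after the last $N$-move of $\tilde P$; the two constraints above can then be verified from the Dyck property of $\tilde P$. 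This establishes a bijection, so there are at most $C_{n-k+2}$ valid movement sequences and hence at most $C_{n-k+2}$ preimages.

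The main obstacle is proving the height bound: that from position $k-1$ to position $T'$ of any valid movement sequence, the height never dips below $k-2$. This uses both directions of the $T$-avoidance criterion, namely that a stack of size less than $k-1$ always admits the next input element, and that an $X$ is only triggered either by a potential pattern violation or by input exhaustion. Once the height bound is in hand, the remaining verifications for the bijection are routine bookkeeping.
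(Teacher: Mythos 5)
Your proposal is correct and follows essentially the same route as the paper: bound preimages by movement sequences (distinct preimages of a fixed output have distinct movement sequences), and show these sequences correspond to Dyck paths of semilength $n$ staying at height at least $k-2$ away from the endpoints, which are counted by $C_{n-k+2}$. Your justification of the height bound (an $X$ requires stack height at least $k-1$ or empty input, since a stack of at most $k-1$ entries cannot contain a pattern of length $k$) is just a slightly more explicit phrasing of the paper's observation that the first $k-2$ entries never leave the stack until the end.
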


\begin{proof}
Note that any point in the movement sequence of any permutation there must be at least as many entries that have entered the stack as entries that have left the stack. Thus there is a clear bijection between potential movement sequences of length $n$ permutations and Dyck paths of semi-length $n$. 

If all elements of $T$ are  of length at least $k$, then while sending a permutation through $s_{T}$, the first $k-2$ entries of the permutation stay at the bottom of the stack for the entire process. Thus, given a length $n$ permutation $\pi$, the movement sequence of $\pi$ under the map $s_{T}$ begins with $k-2$ ``enter"s and ends with $k-2$ ``exit"s. Therefore, its corresponding Dyck path doesn't pass below the $k-2$ height line at any point in between the beginning and the end. Such Dyck paths are in bijection with Dyck paths of semi-length $n-k+2$ by removing these predetermined steps. Thus, there are at most $C_{n-k+2}$ movement sequences for a permutation under the map $s_{T}$.

 If $\pi$ and $\rho$ are permutations such that $s_T(\pi) = s_T(\rho)$ where $\pi$ and $\rho$ share the same movement sequence then it must be the case that $\pi = \rho$. This is because, given a movement sequence and a permutation in the image of $s_{T}$, it is possible to reverse the steps to obtain the original permutation. Thus, for any $\gamma$ in the image of $s_{T}$, all distinct permutations mapping to $\gamma$ must have distinct movement sequences. This means there can only be as many preimages under the map $s_{T}$ for a given element of the image as there are movement sequences for the map.  
\end{proof}

\begin{definition}
Let $T$ be a set of permutations and let $k$ be the minimum length of a permutation in $T$. Let $N$ and $X$ denote the two possible entries of the movement sequence, ``enter the stack" and ``exit the stack." We will say that a movement sequence is a $T$\textit{-legal movement sequence} if it is of the form $N^{k-2}m X^{k-2}$ where $m$ is a movement sequence. 
\end{definition}

Next, we classify for which $s_{\{\sigma\}}$ the bound stated in \cref{bound} is sharp. In order to prove the next theorem we will need some definitions.

\begin{definition}
Let $\pi$ be a length $n-1$ permutation. Then the permutation $$(n-\pi(1))(n-\pi(2))\cdots (n-\pi(n-1))$$ is the \textit{complement} of $\pi$.
\end{definition}

For example, the complement of $23145$ is $43521$. We let $\sigma^c$ denote the complement of $\sigma$ and $T^c$ denote the set $\{\sigma^c: \sigma\in T\}$.

\begin{lemma}\label{lem: complement}
Let $\pi$ be a permutation of length $n$ and $T$ be a reduced set of permutations. If $s_T(\pi) = \rho$ then $s_{T^c}(\pi^c) = \rho^c$. 
\end{lemma}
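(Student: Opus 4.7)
The plan is to show that $s_T$ applied to $\pi$ and $s_{T^c}$ applied to $\pi^c$ execute in lockstep, with the stack contents, remaining input, and partial output of the second run being pointwise complements of those of the first at every step. Once this is established, looking at the terminal state immediately gives $s_{T^c}(\pi^c) = \rho^c$.

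The first ingredient I would record is that pattern containment is preserved under complementation: for any words $w$ and $\sigma$ of the same length, $w \cong \sigma$ if and only if $w^c \cong \sigma^c$, and consequently $w$ contains $\sigma$ if and only if $w^c$ contains $\sigma^c$. In particular, $w$ avoids $T$ if and only if $w^c$ avoids $T^c$. This is immediate from the definition of order-isomorphism, since complementation reverses every relation $w(i) < w(j)$ while leaving the underlying indices in place.

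Next I would induct on the number of steps $t$ of the algorithm, proving the invariant that if after $t$ steps of sorting $\pi$ with $s_T$ the stack reads $u_t$ from top to bottom, the unread input is $v_t$, and the output so far is $o_t$, then after $t$ steps of sorting $\pi^c$ with $s_{T^c}$ these three pieces are $u_t^c$, $v_t^c$, and $o_t^c$ respectively, and both computations have performed the same sequence of enter/exit moves. The inductive step reduces to checking that the ``enter the stack'' test has the same verdict in both runs: for $s_T$ it asks whether the prospective stack word $x u_t$ avoids $T$, where $x$ is the first letter of $v_t$; for $s_{T^c}$ the corresponding word is $(x u_t)^c$, whose avoidance of $T^c$ is equivalent to the first test by the preservation observation. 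So the two algorithms agree move by move, and each move transports the complementation relation to the next step.

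The main obstacle is purely bookkeeping. One must set up the notation carefully so that reading the stack top to bottom composes cleanly with pointwise complementation, which it does automatically because complementation acts on values rather than positions. One must also handle the terminal phase, in which the stack is flushed after the input is exhausted, on equal footing with the main loop; here the enter-test vacuously fails on both sides, so the invariant is maintained. Running the invariant until both computations terminate yields $o_\infty = \rho$ on one side and $o_\infty^c = \rho^c$ on the other, completing the proof.
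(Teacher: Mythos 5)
Your proof is correct, but it takes a genuinely different route from the paper's. The paper proves the lemma by induction on the length of $\pi$, using the recursive characterization of $s_T$ from \cref{thm:recursion}: it observes that if $(a_0,\dots,a_k)$ is the $T$-clumping of $\pi$, then $(a_0^c,\dots,a_k^c)$ is the $T^c$-clumping of $\pi^c$, and then applies the recursion $s_{T^c}(\pi^c) = (a_{k-1}^c)^r\, s_{T^c}(a_0^c\cdots a_{k-2}^c a_k^c)$ together with the inductive hypothesis. You instead argue directly at the level of the algorithm, showing that the two runs proceed in lockstep with stack, input, and output related by pointwise complementation at every step; the only nontrivial input is the same fact both proofs ultimately rest on, namely that a word avoids $T$ if and only if its complement avoids $T^c$ (note that here ``complement'' of the stack word must be read as $v \mapsto n+1-v$ relative to the ambient permutation, not relative to the word's own length, but since order-isomorphism depends only on relative order this is harmless). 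Your approach is more self-contained, since it does not invoke the clumping recursion, and it yields the slightly stronger conclusion that $\pi$ and $\pi^c$ have identical movement sequences, which dovetails with the counting arguments of \cref{sec:preims}; the paper's proof is shorter given that \cref{thm:recursion} is already established and keeps the exposition uniformly in the recursive framework used throughout \cref{sec:bijSec}.
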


\begin{proof}
Let $\pi$ and $T$ be as in the lemma. We will prove this by using the recursive definition in \cref{thm:recursion}. We will induct on $n$. If $n<m$ where $m$ is the minimum length of a permutation in $T$, then $\pi$ is $T$-avoiding. Therefore $\pi^c$ is $T^c$-avoiding so $$s_{T^c}(\pi^c) = (\pi^c)^r = (\pi^r)^c = (s_T(\pi))^c.$$ Assume for all $\ell \leq n$, for all permutations of length $\ell$ the lemma holds.  

If $\pi$ is $T$-avoiding, then $\pi^c$ is $T^c$-avoiding so $$s_{T^c}(\pi^c) = (\pi^c)^r = (\pi^r)^c = (s_T(\pi))^c.$$ Assume $\pi$ contains an element of $T$. Let $(a_0,\dots,a_k)$ be the $T$-clumping of $\pi$. Then $(a_0^c,\dots,a_k^c)$ must be the $T^c$-clumping of $\pi^c$. Thus, $$s_{T^c}(\pi^c) = (a_{k-1}^c)^r s_{T^c}(a_0^c,\dots,a_{k-2}^c,a_k^c)  = (a_{k-1}^r s_{T}(a_0,\dots,a_{k-2},a_k))^c  =  (s_{T}(\pi))^c.$$

The second equality is due to our induction hypothesis. Thus, by induction, our lemma holds. 
\end{proof}

\begin{definition}
Let $\sigma$ be a word of length $k$ in the alphabet $\mathbb{Z}\cup \{m^c: m\in \mathbb{Z}\}$. A length $n$ permutation $\pi$ \textit{literally contains} $\sigma$ if there are indices $i_1<\cdots <i_k$ such that for each $j\leq k$:
$$\begin{cases}
\pi^c(i_j) = m, \hspace{2mm} \textrm{ if }\sigma(j) = m^c\\
\pi(i_j) = \sigma(j),\hspace{1mm} \textrm{ otherwise}
\end{cases} $$ Where $\pi^c$ is the complement of $\pi$. 
\end{definition}

For example, the permutation $1423$ \textit{literally contains} the permutation $11^c 2$ in the subsequence $142$. The permutation $1243$ does not literally contain $11^c 2$ because though it contains $143$, the three is not equal to two in the literal sense.

\begin{theorem}\label{generalizationPreims}
Let $\sigma$ be a length $k$ permutation. If $\sigma(1)$ and $\sigma(2)$ are consecutive numbers, then for every $n \geq k$, there exists a permutation $\pi\in S_n$ such that $|s_{\{
\sigma\}}^{-1}(\pi)| = C_{n-k+2}$. If $\sigma(1)$ and $\sigma(2)$ are not consecutive, then for every $n>k$ there are no $\pi\in S_n$ such that $|s_{\{
\sigma\}}^{-1}(\pi)| = C_{n-k+2}$.
\end{theorem}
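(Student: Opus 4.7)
The plan is to split the theorem into its two directions and, in each, reduce to the case $\sigma(1)<\sigma(2)$ via \cref{lem: complement} (complementation preserves consecutiveness, swaps the relative order of $\sigma(1)$ and $\sigma(2)$, and carries preimages of $\pi$ to preimages of $\pi^c$, so it suffices to handle this case).

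For the non-consecutive direction, I would show that the $T$-legal movement sequence $m=N^{k-1}(XN)^{n-k+1}X^{k-1}$ is never realized, giving $|s_{\{\sigma\}}^{-1}(\pi)|\le C_{n-k+2}-1$. Tracing the stack through $m$ for any hypothetical preimage $\rho$: after the initial $N^{k-1}$ the stack is $\rho(k-1),\ldots,\rho(1)$ top to bottom, and each subsequent $XN$-pair replaces the top with the next input letter. At each middle $X$ the algorithm pops only if pushing the next input would complete a $\sigma$-occurrence, and since the stack has length $k-1$ at these moments, the hypothetical pushed stack (of length exactly $k$) must itself be an occurrence of $\sigma$. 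Translating these $k$-element stacks into positions of $\pi$ shows that for each $i=1,\ldots,n-k+1$ the word $\pi(i+1)\,\pi(i)\,\pi(n-k+3)\,\pi(n-k+4)\cdots\pi(n)$ is order-isomorphic to $\sigma$. Since $\sigma(1)<\sigma(2)$, this forces $\pi(1)>\pi(2)>\cdots>\pi(n-k+2)$. Pick $j^*\geq 3$ with $\sigma(1)<\sigma(j^*)<\sigma(2)$, which exists by non-consecutiveness. Then $\pi(n-k+j^*)$ plays the role $\sigma(j^*)$ in every one of those patterns, so $\pi(i+1)<\pi(n-k+j^*)<\pi(i)$ for all $i$; taking $i=1$ and $i=n-k+1$ yields $\pi(n-k+j^*)>\pi(2)$ and $\pi(n-k+j^*)<\pi(n-k+1)$, contradicting $\pi(2)\geq\pi(n-k+1)$ (which holds because $n>k$ forces $n-k+1\geq 2$ and the first $n-k+2$ entries are strictly decreasing).

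For the consecutive direction, with $a=\sigma(1)$ and $\sigma(2)=a+1$, I would construct $\pi$ explicitly. Partition $\{3,\ldots,k\}=L\sqcup H$ with $L=\{j:\sigma(j)<a\}$ and $H=\{j:\sigma(j)>a+1\}$. Set $\pi(n-k+j)=\sigma(j)$ for $j\in L$ (placing a small value from $\{1,\ldots,a-1\}$) and $\pi(n-k+j)=n-k+\sigma(j)$ for $j\in H$ (placing a large value from $\{n-k+a+2,\ldots,n\}$), and set $\pi(1),\ldots,\pi(n-k+2)=(n-k+a+1),(n-k+a),\ldots,a$, the middle values in strictly decreasing order. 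For any preimage $\rho$ of $\pi$, the first $k-2$ entries of $\rho$ must form the fixed bottom block $B$ of the stack (equal to $\pi(n),\pi(n-1),\ldots,\pi(n-k+3)$ read bottom to top), and $B$ never moves during the middle of the sort. The key claim is that the dynamics above $B$ decouples as $s_{\{12\}}$ on the middle values: when pushing $x$ onto $T\cdot B$ for current upper stack $T$, the result contains $\sigma$ iff $x<t_1$, where $t_1$ is the top of $T$. Any $\sigma$-occurrence must use exactly two upper elements and all of $B$, because (i) upper elements precede bottom ones in every subsequence, and if three or more upper elements appeared then the first two subsequence positions would be upper with strictly decreasing values (since $T$ must itself be decreasing, as $T\cdot B$ had avoided $\sigma$ before the push), contradicting $\sigma(1)<\sigma(2)$; and (ii) the strict separation small $<$ middle $<$ large forces each $B$-entry to play the role dictated by its construction, making the unique two-upper occurrence match $\sigma$ exactly when $x<t_1$. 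Hence preimages of $\pi$ under $s_{\{\sigma\}}$ are in bijection with preimages of the decreasing sequence under $s_{\{12\}}$ on $n-k+2$ letters, and via \cref{lem: complement} the latter count equals $|s^{-1}(\id_{n-k+2})|=C_{n-k+2}$ by Knuth's classical result.

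The main obstacle I expect is the decoupling claim (i)--(ii): it requires an inductive argument that every intermediate $T$ remains strictly decreasing in the middle values (so that (i) applies), and a rank-counting verification using the small/middle/large separation that ensures the unique two-upper occurrence is forced to match $\sigma$ exactly when $x<t_1$. Once the decoupling is in hand, the rest of the consecutive case is a direct translation between movement sequences in $s_{\{\sigma\}}$ and $s_{\{12\}}$.
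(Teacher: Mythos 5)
Your proposal is correct and follows essentially the same route as the paper: the consecutive case via an explicit target permutation whose frozen bottom block (the paper's $\kappa(\sigma)$, with small values literal and large values shifted to the top of the range) reduces the upper-stack dynamics to a $21$-avoiding (in your orientation, $12$-avoiding) stack and hence to Knuth's Catalan count, and the non-consecutive case by showing the movement sequence $N^{k-2}(NX)^{n-k+2}X^{k-2}$ is never realized, costing one of the $C_{n-k+2}$ legal sequences. One small simplification for your decoupling step: since $|B|=k-2$, any $\sigma$-occurrence containing the pushed element $x$ must use a second upper element $t_i$ in the role of $\sigma(2)$, forcing $x<t_i\le t_1$ outright, so you never need to separately rule out occurrences with three or more upper elements (where your ``first two positions are decreasing'' justification would not apply when the first position is $x$).
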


\begin{proof}
Assume $\sigma$ is a length $k$ permutation such that $\sigma(1)$ and $\sigma(2)$ are consecutive. Let $\kappa(\sigma)$ be a length $k-2$ word over the alphabet $\Z\cup \{m^c: m \in \Z\}$ defined as follows: $$\kappa(\sigma)(i) =  \begin{cases}
\sigma(i+2) & \textbf{ if } \sigma(i+2)< \sigma(1)\\
(k+1-\sigma(i+2))^c & \textbf{ if } \sigma(i+2)> \sigma(1).
\end{cases}$$ 
Note that for any permutation $\pi$ of length $n<k$ we have that $s_{\{\sigma\}}$ simply reverses $\pi$, since $\pi$ cannot contain an occurrence of $\sigma$. Fix $n\geq k$. Let $\mu(\sigma)$ be the length $n$ permutation whose last $k-2$ entries are an occurrence of  $\kappa(\sigma)$, and the remaining $n-k+2$ elements are increasing if $\sigma(1)>\sigma(2)$ or decreasing if $\sigma(1)<\sigma(2)$. Let $P_{\sigma,n}$ be the set of length $n$ permutations whose first $k-2$ entries form an occurrence of $ \kappa(\sigma)^r$ and the remaining $n-k+2$ entries form a $231$-avoiding permutation if $\sigma(1)>\sigma(2)$ or a $213$-avoiding permutation if $\sigma(1)<\sigma(2)$. Our goal is to show that $s_T(\mu(\sigma)) = P_{\sigma, n}$ which has size $C_{n-k+2}$.

Let's consider the case where $\sigma(1)>\sigma(2)$ and examine what happens an arbitrary element of $\pi\in P_{\sigma,n}$ is sent through $s_T$. First, $\kappa(\sigma)^r$ enters the stack to form a $\kappa(\sigma)$ in the stack. This occurrence of $\kappa(\sigma)$ stays in the stack until the end. By our construction of $\kappa(\sigma)$, any decrease which appears in the stack after $\kappa(\sigma)$ causes an occurrence of $\sigma$. Thus, with $\kappa(\sigma)$ at the bottom of the stack, the stack acts as a $21$-avoiding stack and the rest of the permutation is sorted accordingly. Since $\rho$ is 231-avoiding, and 231-avoiding permutations are exactly those mapped to the identity by the $21$-avoiding stack $s$ \cite{Knuth}, the remaining elements of the stack are correctly sorted increasingly. The word $\kappa(\sigma)$ exits afterward to form the permutation $\mu(\sigma)$. 

For the case where $\sigma(1)< \sigma(2)$, \cref{lem: complement} gives us the same result. To see this note that if $\sigma(1)<\sigma(2)$ then $\sigma^c(1)>\sigma^c(2)$, $\mu(\sigma^c) = \mu(\sigma)^c$, and $P_{\sigma^c,n} = P_{\sigma,n}^c$. So, we have by the first case that $s_{\sigma_c}(\mu(\sigma^c)) = P_{\sigma^c, n} = P_{\sigma,c}^c$ and so by \cref{lem: complement}, $s_{\sigma}(\mu(\sigma)) = P_{\sigma,n}$. 

Since the number of 231-avoiding permutations is counted by the Catalan numbers, the size of $P_{\sigma,n}$ is counted by $C_{n-k+2}$ where $k$ is the length of $\sigma$. Thus, the permutation $\mu(\sigma)$ has preimage of size $C_{n-k+2}$.

For the converse direction, assume $\sigma$ is a length $k$ permutation such that $\sigma(1)$ and $\sigma(2)$ are not consecutive. We will show that for each $n$ every permutation in $S_n$ has fewer than $C_{n-k+2}$ many preimages. Consider the movement sequence  $N^{k-2}(NX)^{n-k+2}X^{k-2}$. For example, if $n$ were 5 and $k$ were  $4$, this sequence would be $NNNXNXNXXX$. Call this movement sequence $m$. 

Note that in order for movement sequence $m$ to occur for a permutation $\pi$ of length $n>k$, we must have that once the first $k-1$ elements of $\pi$ have entered the stack, every entry after would form an occurrence of $\sigma$ in the stack.  In order for this to happen, it must be true that for every $i$ such that  $k-2<i\leq n-1$ we have that $\pi(i+1)\pi(i)\pi(k-2) \cdots \pi(1)$ is order isomorphic to $\sigma$. Assume this is true. Since $\sigma(1)$ and $\sigma(2)$ are not consecutive, let $j\leq k$ be so that $\sigma(1)<\sigma(j)<\sigma(2)$, without loss of generality. Then note that $\pi(k+1-j)$ is the element of $\pi$ which maps to $\sigma(j)$ in the order isomorphism between $\pi(i+1)\pi(i)\pi(k-2) \cdots \pi(1)$ and $\sigma$, for any $i<n-1$.  Then, since $\pi(k)\pi(k-1)\pi(k-2) \cdots \pi(1)$ is an occurrence of $\sigma$, it must be true that $\pi(k)<\pi(k+1-j)$. However, since $\pi(k+1)\pi(k)\pi(k-2) \cdots \pi(1)$ is an occurrence of $\sigma$, it must be true that $\pi(k)>\pi(k+1-j)$. This is a contradiction. Thus, if $\sigma(1)$ and $\sigma(2)$ are not consecutive, no permutation of length $n>k$ has movement sequence $m$. So, since there are only $C_{n-k+2}$ many $\{\sigma\}$-legal movement sequences of length $2n$ by \cref{bound}, and we ruled out one possible option, there are fewer than $C_{n-k+2}$ many movement sequences left. Since for a given permutation, every element of its preimage under $s_{\{\sigma\}}$ must have been sorted by a different movement sequence, we then have that the size of the preimage is constrained to be of size less than $C_{n-k+2}$ for any $n$-length permutation. 
\end{proof}

We will now further explore the maximum number of preimages under the map $s_{\{213,231\}}$.

\begin{theorem}\label{213taustack}
Let $\tau$ be a non-identity permutation of length at least three. Then the number of permutations of length $n$ sorted to the identity by $s_{\{213,\tau\}}$ is counted by $C_{n-1}$. Additionally, the permutations which sort to the identity are exactly those of the form $n\rho$ for $\rho \in S_{n-1}$ avoiding 231.   
\end{theorem}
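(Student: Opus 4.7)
The plan is to prove the stronger characterization, from which the count $C_{n-1}$ is immediate by Knuth's classical enumeration of $231$-avoiding permutations: a permutation $\pi\in S_n$ is sorted to $\id_n$ by $s_{\{213,\tau\}}$ if and only if $\pi = n\rho$ for some $231$-avoiding $\rho\in S_{n-1}$.

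I would first establish that $\pi(1)$ must equal $n$. Since the target output is $12\cdots n$, the value $n$ is the last entry popped. Any element that lies below $n$ at the moment $n$ is pushed can only exit the stack after $n$ does, because a stack pops from the top; since $n$ is itself popped last, there can be no such elements. Hence $n$ is pushed into an empty stack, which forces $\pi(1)=n$.

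Next I would analyze the algorithm after $n$ occupies the bottom of the stack. Writing $\rho = \pi(2)\cdots\pi(n)$, the key claim is that the portion of the stack above $n$ must remain increasing from top to bottom throughout the remaining computation: if two elements $a$ above $b$ with $a>b$ ever appeared, then $a,b,n$ read top to bottom would form a $213$ pattern. Consequently the full stack is always increasing from top to bottom, i.e., patternwise $12\cdots k$, so since $\tau$ is non-identity no copy of $\tau$ can ever appear. The $\tau$-avoidance constraint therefore never becomes binding, and the push/pop rule of $s_{\{213,\tau\}}$ applied to $\rho$ coincides with that of ordinary stack sort $s$ on $\rho$: push precisely when the next input is smaller than the top of the upper stack, or that stack is empty. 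This yields $s_{\{213,\tau\}}(n\rho) = s(\rho)\cdot n$, which equals $\id_n$ exactly when $s(\rho)=\id_{n-1}$, and by Knuth's theorem this occurs if and only if $\rho$ avoids $231$.

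The main obstacle is the verification that the $\tau$-avoidance condition is genuinely redundant during the processing of $\rho$, so that the algorithm truly reduces to ordinary stack sort with $n$ as a silent passenger. This is delivered by the forced increasing structure of the stack above $n$, which relies on both the fact that $213$ has its maximal entry at position $3$ (the bottom) and that $\tau$ is not the identity. Once this reduction is in place the enumeration is immediate, as there are $C_{n-1}$ choices of $231$-avoiding $\rho\in S_{n-1}$.
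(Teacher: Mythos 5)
Your proposal is correct and follows essentially the same route as the paper: force $\pi(1)=n$ because the first entry is necessarily the last output, observe that with $n$ at the bottom the $213$-constraint forces the stack above $n$ to stay increasing from top to bottom so that the $\tau$-constraint (for non-identity $\tau$) is never active, and then reduce to Knuth's classical result on $231$-avoiders. The reduction to ordinary stack sort acting on $\rho$ with $n$ as a ``silent passenger'' is exactly the paper's argument.
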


\begin{proof}
Assume that $\pi\in S_n$ begins with $n$. For any $s_{\{\sigma,\tau\}}$ for $\sigma$ and $\tau$ of length at least 3, the first element remains at the bottom of the stack until the end, when it is added to the end of the output. Thus, throughout the this process, $\pi(1) = n$ stays at the bottom of the stack. Since $n$ is greater than any other entry of $\pi$, any occurrence of $12$ in the rest of the permutation would cause an occurrence of $312$ in the permutation and thus $213$ in the stack. Therefore, with the $n$ lingering in the bottom of the stack, the rest of the permutation gets sorted with the rest of the stack avoiding $21$ as well as $\tau$. Since $\tau$ contains $21$ this is equivalent to having the stack avoid 21. Note that this is equivalent to classical stack sort. It is a well known theorem of Knuth \cite{Knuth} that the permutations sorted to the identity by the stack sort algorithm are exactly those which avoid 231. Thus, those elements starting with $n$ which get sorted to the identity are exactly those $\pi$ such that $\pi(2)\cdots \pi(n)$ avoids 231.

Note that any permutation not beginning with $n$ cannot be sorted to the identity permutation by $s_{\{213,\tau\}}$ because the first entry of the permutation is always the last entry in the output. This is easy to see since the first entry always stays at the bottom of the stack until the end. 
\end{proof}

\cref{213taustack} has the following corollaries.

\begin{cor}\label{cor:213stack}
Let $\pi\in S_n$ be a permutation. Then $s_{213}(\pi) = \operatorname{id}_n$ if and only if $\pi = n\rho$ for 231-avoiding $\rho \in S_{n-1}$.
\end{cor}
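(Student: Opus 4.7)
The plan is to deduce this corollary from \cref{213taustack} by introducing an auxiliary pattern $\tau$ that cannot actually appear in the stack during the sort of any length-$n$ permutation.

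First, I would dispense with the trivial case $n = 1$: $s_{\{213\}}(1) = 1 = \id_1$, which matches the claim with $\rho$ the empty permutation. For $n \geq 2$, I would choose any non-identity permutation $\tau$ of length $n+1$; such a $\tau$ exists because $n + 1 \geq 3$, which is exactly the regime covered by \cref{213taustack}.

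The key observation is that when any $\pi \in S_n$ is being sorted through either $s_{\{213\}}$ or $s_{\{213,\tau\}}$, the stack holds at most $n$ elements at any moment. Since $\tau$ has length $n+1 > n$, the stack can never contain an occurrence of $\tau$, so the $\tau$-avoidance constraint is vacuous throughout the entire sort. Consequently, $s_{\{213\}}(\pi) = s_{\{213,\tau\}}(\pi)$ for every $\pi \in S_n$. Applying \cref{213taustack} to the set $\{213,\tau\}$ then yields that the length-$n$ preimage of $\id_n$ under $s_{\{213,\tau\}}$ is exactly the set of permutations of the form $n\rho$ with $\rho \in S_{n-1}$ avoiding $231$, and by the equality of maps on $S_n$ just established, the same classification holds for $s_{\{213\}}$.

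The main ``obstacle'' here is really just bookkeeping: one must verify that the auxiliary $\tau$ meets the hypotheses of \cref{213taustack}, namely being non-identity and of length at least three, both of which are automatic once $n \geq 2$. As an alternative approach, one could simply rerun the stack-state analysis from the proof of \cref{213taustack} for the single-pattern map directly. If $\pi$ does not begin with $n$, its first entry stays at the bottom of the stack until the end and forces the output to terminate in $\pi(1) \neq n$, so $s_{\{213\}}(\pi) \neq \id_n$. If instead $\pi = n\rho$, the $n$ pinned at the bottom turns any ascent above it into a $213$-pattern in the stack, reducing the remainder of the sort to classical stack sort applied to $\rho$; Knuth's classical result then gives the $231$-avoidance characterization of the length-$(n-1)$ suffixes producing $\id_n$.
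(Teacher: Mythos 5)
Your proposal is correct and follows essentially the same route as the paper, which also obtains the corollary by specializing \cref{213taustack} --- the paper simply takes $\tau = 213$ (so that $\{213,\tau\} = \{213\}$ outright), whereas you take a vacuous $\tau$ of length $n+1$; both choices satisfy the theorem's hypotheses and yield the claim immediately. Your fallback of rerunning the stack-state analysis directly is also sound but unnecessary.
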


This corollary can be easily seen by letting $\tau = 213$. 
\begin{cor}\label{cor:231stack}
Let $\tau$ be a permutation of length at least 3 which is not equal to the reverse identity. Then the number of permutations of length $n$ sorted to the reverse identity by $s_{231,\tau}$ is counted by $C_{n-1}$. Additionally, the permutations which sort to the reverse identity are exactly those of the form $1\rho$ for $\rho \in S_{n-1}$ avoiding 213.   
\end{cor}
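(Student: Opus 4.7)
The plan is to derive this corollary from \cref{213taustack} by applying the complement operation, using \cref{lem: complement} as the bridge. The key numerical fact is that $213^c = 231$ and that complementation swaps $\id_n$ with the reverse identity. So the hypothesis of \cref{213taustack} (``$\tau$ non-identity of length at least three'') corresponds exactly, under complementation, to the hypothesis here (``$\tau$ not the reverse identity, of length at least three'').

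First I would set $\tau' = \tau^c$. Since $\tau$ is not the reverse identity and has length at least $3$, the permutation $\tau'$ is not the identity and has length at least $3$, so \cref{213taustack} applies to $s_{\{213,\tau'\}}$. It yields that the length $n$ preimages of $\id_n$ under $s_{\{213,\tau'\}}$ are exactly the permutations $n\rho$ with $\rho \in S_{n-1}$ avoiding $231$, a set of size $C_{n-1}$.

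Next I would translate this statement back to $s_{\{231,\tau\}}$ using \cref{lem: complement}. Since $\{213,\tau'\}^c = \{231,\tau\}$ and $(\id_n)^c$ is the reverse identity, for every $\pi \in S_n$ we have $s_{\{213,\tau'\}}(\pi) = \id_n$ if and only if $s_{\{231,\tau\}}(\pi^c)$ equals the reverse identity. (If the set $\{213,\tau'\}$ happens not to be reduced, we first replace it by the equivalent reduced set, which does not change the map, so \cref{lem: complement} still applies.) Thus the length $n$ permutations sorted to the reverse identity by $s_{\{231,\tau\}}$ are exactly $\{(n\rho)^c : \rho \in S_{n-1} \text{ avoids } 231\}$.

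Finally, I would unpack $(n\rho)^c$: the leading $n$ complements to a leading $1$, and the remaining $n-1$ entries, once standardized, form the complement $\rho^c$ of $\rho$ in $S_{n-1}$. Because complementation carries a $231$ pattern to a $213$ pattern, $\rho$ avoids $231$ if and only if $\rho^c$ avoids $213$. Both the form $1\rho^c$ and the $213$-avoidance condition in the corollary then fall out, and the count $C_{n-1}$ is preserved because complementation is a bijection (and equivalently $C_{n-1}$ counts $213$-avoiders in $S_{n-1}$). The only bookkeeping step that requires any care is verifying the identification $(n\rho)^c = 1\cdot\rho^c$ up to the standardization of values; there is no substantive obstacle.
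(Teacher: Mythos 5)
Your proposal is correct and follows exactly the route the paper intends: the paper's entire proof is the one-line remark that the corollary follows from \cref{lem: complement}, and you have simply filled in the details of that complementation argument (including the useful observations that $213^c=231$, that complementation swaps the identity with the reverse identity, and that $(n\rho)^c = 1\rho^c$ with $231$-avoidance becoming $213$-avoidance).
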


This corollary is a result of the \cref{lem: complement}.

Next we will show that the identity and the reverse identity are the only two permutations with maximal sized preimage sets under the map $s_{\{213,231\}}$.

\begin{cor}\label{213,231preims}
The maximal number of preimages for any permutation of length $n$ under the map $s_{213,231}$ is $C_{n-1}$. This maximum is attained only by $\textrm{id}_n$ and $\textrm{id}_n^r$.
\end{cor}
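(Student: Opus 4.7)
My plan is to combine the Dyck-path upper bound of \cref{bound} with a movement-sequence argument. Since $T = \{213, 231\}$ has minimum length $k = 3$, \cref{bound} gives $|s_T^{-1}(\gamma)| \leq C_{n-1}$ for every $\gamma \in S_n$. Taking $\tau = 231$ in \cref{213taustack} yields $|s_T^{-1}(\id_n)| = C_{n-1}$, and taking $\tau = 213$ in \cref{cor:231stack} yields $|s_T^{-1}(\id_n^r)| = C_{n-1}$, so both $\id_n$ and $\id_n^r$ attain the maximum. The remaining task is to show no other permutation does.

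For uniqueness I would exhibit a single $T$-legal movement sequence that is not realized by any preimage of a $\gamma$ outside $\{\id_n, \id_n^r\}$. The natural candidate is the zigzag $m = N(NX)^{n-1}X$, in which $\pi(1)$ is pushed and stays at the bottom of the stack while each subsequent entry of the input is pushed and then immediately popped. Tracing the algorithm along $m$ shows the output is $\pi(2)\pi(3)\cdots\pi(n)\pi(1)$, so the unique candidate preimage of a given $\gamma$ having movement sequence $m$ is $\pi = \gamma(n)\gamma(1)\gamma(2)\cdots\gamma(n-1)$.

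Next I would check when the algorithm applied to this $\pi$ really does follow $m$. For each $i \in \{3, \ldots, n\}$ the pop that precedes the push of $\pi(i)$ must be forced, which means that pushing $\pi(i)$ onto the length-$2$ stack $[\pi(i-1), \pi(1)]$ would create a length-$3$ pattern in $T$. Reading top to bottom, this says $(\pi(i), \pi(i-1), \pi(1)) = (\gamma(i-1), \gamma(i-2), \gamma(n))$ is order-isomorphic to $213$ or to $231$. The $213$ case unpacks as $\gamma(i-2) < \gamma(i-1) < \gamma(n)$ and the $231$ case as $\gamma(n) < \gamma(i-1) < \gamma(i-2)$, so in either case the sign of $\gamma(i-1) - \gamma(n)$ is determined by the case; a short check shows this sign in turn forces the same case at index $i+1$. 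Thus either every triple is of $213$-type, giving $\gamma(1) < \gamma(2) < \cdots < \gamma(n)$ (so $\gamma = \id_n$), or every triple is of $231$-type, giving $\gamma(1) > \gamma(2) > \cdots > \gamma(n)$ (so $\gamma = \id_n^r$).

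Hence for any $\gamma \notin \{\id_n, \id_n^r\}$ the movement sequence $m$ is not realized, and so $|s_T^{-1}(\gamma)| \leq C_{n-1} - 1 < C_{n-1}$. The main obstacle is the choice of witness movement sequence: one wants every forced pop to involve the bottom entry $\pi(1) = \gamma(n)$ so that the constraints on consecutive pairs of $\gamma$-entries telescope into a single monotonicity condition, and the zigzag $N(NX)^{n-1}X$ is the cleanest option.
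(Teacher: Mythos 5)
Your proposal is correct and follows essentially the same route as the paper: the $C_{n-1}$ upper bound from \cref{bound}, attainment at $\id_n$ and $\id_n^r$ via \cref{213taustack} and \cref{cor:231stack}, and uniqueness by showing the zigzag movement sequence $N(NX)^{n-1}X$ can only be realized by $n12\cdots(n-1)$ and $1n(n-1)\cdots 2$, which sort to $\id_n$ and $\id_n^r$ respectively. Your telescoping sign argument just makes explicit a step the paper states more briefly.
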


\begin{proof}
The fact that the maximum number of preimages of a permutation of length $n$ under $s_{213,231}$ is $C_{n-1}$ follows directly from \cref{bound}. That the identity and its reverse attain this maximum follows from \cref{213taustack} and \cref{cor:231stack}. All there is left to show is that no other permutation attains the maximum number of preimages. Let's assume $\pi$ has the maximum number of preimages. Note that by the argument in \cref{bound}, for a permutation $\pi$ to have the maximum number of preimages, for each $\{213,231\}$-legal movement sequence $v$, there must be an element of the preimage $\rho$ such that by following $v$, $\rho$ is sorted to $\pi$. Then, there must be some permutation $\rho$ such that $N(NX)^{n-1}X$ sorts $\rho$ to $\pi$. This is only possible if \textit{every} time a third entry tries to enter the stack, it causes a 213 or 231 in the stack. The only way for this to be possible is if $\rho$ is of the form $n12\cdots (n-1)$ or $1n (n-1) \cdots 2$ . In the first case, $\rho$ sorts to $\textrm{id}_n$ and in the second, $\rho$ sorts to $\textrm{id}_n^r$. Thus, the movement sequence $N(NX)^{n-1}X$ must only be used by $s_{213,231}$ to sort permutations to the identity and its inverse. Thus, no other permutation has the maximum number of preimages. 
\end{proof}

\section{ Periodic points of \texorpdfstring{$s_{\{123,132\}}$}{s (sigma, tau)}} \label{sec:periodicity}

The goal of this section is to classify the permutations which are periodic points under the map $s_{\{123,132\}}$. We will begin with some definitions.

\begin{definition}
Given a map $f:A \to B$, an element $a\in A$ is a \textit{periodic point} if for some $n\in \N$ (with $n>0$) we have that $f^n(a) = a$. 
\end{definition}

\begin{definition}
Let $\pi$ be a permutation of length $n$. We say that $\pi$ is \textit{half-decreasing} if the subsequence $$\pi(n-1)\pi(n-3)\cdots \pi(2) \textrm{ for odd }n$$
$$\pi(n-1)\pi(n-3)\cdots \pi(3) \textrm{ for even }n$$
is the identity of length $\lfloor \frac{n-1}{2}\rfloor$. (Being order isomorphic to the identity is not sufficient.) We will refer to this subsequence as its \textit{decreasing half}. 
\end{definition}

For example $56342718$ and $947382615$ are both half-decreasing permutations while $789342615$ and $634251$ are not.

We will next prove the following lemma:

\begin{lemma} \label{lem: half-decreasing}
If $\pi$ is a half-decreasing permutation of length $n$ then the map $s_{\{123,132\}}$ acts on it as follows:
$$s_{\{123,132\}}(\pi) =  \pi(3)\pi(2)\pi(5)\cdots \pi(n)\pi(n-1)\pi(1) \textrm{ for odd }n$$
$$s_{\{123,132\}}(\pi) =  \pi(2)\pi(4)\pi(3)\pi(6)\cdots \pi(n)\pi(n-1)\pi(1) \textrm{ for even }n.$$
In other words, the decreasing half is fixed under $s_{\{123,132\}}$ and the remaining elements shift cyclically to the left. 
\end{lemma}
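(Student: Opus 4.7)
The plan is to prove the lemma by directly simulating $s_{\{123,132\}}$ on $\pi$ and tracking an invariant on the stack contents after each prefix of the input has been processed. Before the simulation, I would record the key numerical fact that drives everything: for a half-decreasing $\pi$ the decreasing half occupies the specified positions with values exactly $\{1,2,\dots,m\}$ for $m = \lfloor (n-1)/2 \rfloor$, so every non-DH entry is strictly greater than every DH entry, and consecutive DH elements (in input order) have values differing by exactly one and decreasing.

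The central claim I would prove by induction is an invariant: in the odd case, after $\pi(1),\dots,\pi(2k)$ have been processed (for $1 \le k \le m$), the stack read top to bottom equals $[\pi(2k),\pi(1)]$ and the output so far is $\pi(3)\pi(2)\pi(5)\pi(4)\cdots \pi(2k-1)\pi(2k-2)$; in the even case, after $\pi(1),\dots,\pi(2k+1)$ have been processed, the stack equals $[\pi(2k+1),\pi(1)]$ and the output is $\pi(2)\pi(4)\pi(3)\cdots \pi(2k)\pi(2k-1)$. The base case is a short direct computation that handles the parity difference: for odd $n$ both pushes $\pi(1),\pi(2)$ are legal immediately (the stack has only two entries), while for even $n$ the first DH element $\pi(3)$ arrives with $\pi(1)$ and $\pi(2)$ on the stack and forces exactly one pop (of $\pi(2)$). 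For the inductive step (odd, from $k$ to $k+1$), starting from $[\pi(2k),\pi(1)]$ I push the non-DH $\pi(2k+1)$; the resulting triple has its minimum $\pi(2k)$ in the middle, so the top is not the minimum and neither $123$ nor $132$ arises. Then the DH element $\pi(2k+2)$ is smaller than every stack entry: its attempted push first creates the pattern $132$ via the triple $(\pi(2k+2),\pi(2k+1),\pi(2k))$, forcing $\pi(2k+1)$ to be popped; a second attempt creates the pattern $123$ via $(\pi(2k+2),\pi(2k),\pi(1))$, forcing $\pi(2k)$ to be popped; the third attempt then succeeds, advancing the invariant to $k+1$ with the output extended by $\pi(2k+1)\pi(2k)$, exactly as demanded. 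The even case proceeds by the same two-pops-then-push mechanism after its base step.

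After the last DH element has been processed, the stack is $[1,\pi(1)]$; the remaining non-DH entry $\pi(n)$ is pushed legally (the middle of its triple is the minimum $1$), and emptying the stack at the end of the input appends $\pi(n)\pi(n-1)\pi(1)$ to the output, giving exactly the word claimed by the lemma. The main obstacle is not conceptual but a careful two-parity bookkeeping exercise: one must verify that no stack triple other than the two identified above ever realizes a $123$ or $132$, which follows from the DH/non-DH value separation together with the fact that the stack never grows beyond three entries whose bottom two (when present) are always a DH value sitting on $\pi(1)$.
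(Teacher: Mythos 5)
Your proof is correct, but it takes a genuinely different route from the paper. The paper proves the lemma by induction on $n$: it identifies the colexicographically first occurrence of $321$ or $231$ in $\pi$ (namely $\pi(1)\pi(2)\pi(4)$ for odd $n$, or $\pi(1)\pi(2)\pi(3)$ for even $n$), applies the clumping recursion of \cref{thm:recursion} to peel off the first output pair, and observes that the remaining word is again half-decreasing of length $n-2$, so the inductive hypothesis finishes the job. You instead simulate the stack directly and induct on the length of the processed prefix, maintaining the invariant that the stack always holds exactly $[\pi(2k),\pi(1)]$ (resp.\ $[\pi(2k+1),\pi(1)]$) at each checkpoint; the key observations --- that every non-DH value exceeds every DH value, that consecutive DH values decrease, and that the stack never exceeds three entries with a DH value sitting on $\pi(1)$ --- correctly force exactly the ``two pops then push'' behavior you describe, and your identification of which triples realize $132$ versus $123$ is accurate. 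The paper's approach is shorter because it reuses machinery already established (the $T$-clumping recursion) and reduces to a structurally identical smaller instance; yours is more self-contained and elementary, makes the dynamics of the stack completely explicit, and sidesteps the small subtlety in the paper's reduction of checking that the residual word, after deleting two entries, is still \emph{literally} (not just order-isomorphically) half-decreasing. Both are valid proofs of the same statement.
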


\begin{proof}
We will proceed by induction on $n$. For our first base case, let $n = 3$. Since both half-decreasing permutations of length three (213 and 312) avoid 321 and 231, we have that $s_{\{123,132\}}(213) = 312$ and $s_{\{123,132\}}(312) = 213$. Thus the claim holds for $n=3$. 


For our second base case, let $n=4$. Then $\pi = \pi(1)\pi(2)1\pi(4)$. Since $\pi(1)$ and $\pi(2)$ are both greater than 1, we have that $\pi(1)\pi(2)1$ must form a 321 or a 231. Thus, by our recursion from \cref{thm:recursion},  we have $s_{\{123,132\}}(\pi) = \pi(2)s_{\{123,132\}}(\pi(1)1\pi(4))$. Since $\pi(1)1\pi(4)$ must avoid 321 and 231, we have that $s_{\{123,132\}}(\pi) = \pi(2)\pi(4)1\pi(1)$, thus satisfying the claim. 

Assume for our inductive hypothesis that for all permutations $\rho$ of length less than $n$, $s_{\{123,132\}}(\rho)$ satisfies the claim. Let $\pi$ be a half-decreasing permutation of length $n$. We will consider cases based on if $n$ is even or odd.

Case 1: $n$ is odd. Let $m = \frac{n-1}{2}$. In other words, $m$ is the largest entry in the decreasing half of $n$. Then $\pi = \pi(1)m\pi(3)(m-1) \cdots 1\pi(n)$. Since $\pi(1)$ and $\pi(3)$ are greater than $m$ and $m-1$, we have that $\pi(1)m(m-1)$ is the colexicographically first appearance of 321 or 231. Thus, $s_{\{123,132\}}(\pi) = \pi(3) m s_{\{123,132\}}(\pi(1) (m-1)\pi(5)\cdots \pi(n))$. Note that $\pi(1) (m-1)\pi(5)\cdots \pi(n)$ (when interpreted as a permutation) is half-decreasing. Thus, by our inductive hypothesis, all entries of its decreasing half are fixed and all other elements shift left cyclically. So, $s_{\{123,132\}}(\pi) = \pi(3)m \pi(5) \cdots \pi(n)1\pi(1)$ as desired. 

Case 2: $n$ is even. Let $m = \frac{n}{2}-1$. Again, $m$ is the largest entry in the decreasing half of $n$ and $\pi = \pi(1)\pi(2)m \pi(4) (m-1) \cdots 1\pi(n)$. Since $\pi(1)$ and $\pi(2)$ are greater than $m$, $\pi(1)\pi(2)m$ is the colexicographically first appearance of 321 or 231. Thus, $s_{\{123,132\}}(\pi)= \pi(2)s_{\{123,132\}}(\pi(1)m \pi(4)(m-1) \pi(6)\cdots \pi(n))$. Note that $(\pi(1)m \pi(4)(m-1) \pi(6)\cdots \pi(n)$ forms a half-decreasing sequence, and thus by our induction hypothesis $s_{\{123,132\}}(\pi) = \pi(2)\pi(4)m \pi(6)\cdots \pi(n)1\pi(1)$, as desired. This concludes our induction. 
\end{proof}

\begin{lemma} \label{lem: eventually-half-dec}
Let $\pi$ be a permutation. Then $s_{\{123,132\}}^{m}(\pi)$ is half-decreasing for some $m\in \N$. 
\end{lemma}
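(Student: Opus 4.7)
My plan is to prove the lemma by induction on $k \in \{0, 1, \ldots, \lfloor (n-1)/2 \rfloor\}$, showing that for every $\pi \in S_n$ some iterate of $s_{\{123,132\}}$ lies in
\[
H_k = \{\sigma \in S_n : \sigma(n - 2j + 1) = j \text{ for all } 1 \leq j \leq k\}.
\]
The base $k = 0$ is vacuous, and $H_{\lfloor(n-1)/2\rfloor}$ is exactly the set of half-decreasing permutations, so reaching the top index $k$ yields the lemma.

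The inductive step $k \to k+1$ reduces to two claims. (\textbf{Stability}) $s_{\{123,132\}}(H_k) \subseteq H_k$: for $\pi \in H_k$, a direct stack analysis shows that each push of a small element $j \leq k$ at position $n - 2j + 1$ forces the stack to flush down to $[j, \pi(1)]$, since every element above $\pi(1)$ exceeds $j$. Two positions later, pushing $j-1$ pops the intervening large element and then $j$, before leaving the stack as $[j-1, \pi(1)]$. Recording the cascaded outputs shows that each $j$ lands at position $n-2j+1$ of $s_{\{123,132\}}(\pi)$, preserving $H_k$. (\textbf{Progress}) Every $\pi \in H_k$ has some iterate in $H_{k+1}$: I would track the position of element $k+1$ across iterations. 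The base case $k = 0$ follows from the observation that once $1$ is pushed onto a nonempty stack, the stack instantly reduces to $[1, \pi(1)]$ and $1$ stays second-from-bottom throughout the rest of the sort, so $1$ is output at position $n-1$; hence $1$ reaches its target in at most two iterations. For general $k$, a case analysis on the position of $k+1$ in $\pi \in H_k$, combined with the stack dynamics around the pinned smaller elements, should yield a finite directed graph on the possible positions of $k+1$ whose only attractor is $n - 2k - 1$; finiteness of $S_n$ then forces $k+1$ to reach $n - 2k - 1$ in finitely many iterations.

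The main obstacle I anticipate is Progress for general $k$. Stability amounts to careful bookkeeping once one recognizes that each small push flushes the stack above $\pi(1)$. Progress is harder because it requires identifying the transition rule for the position of $k+1$ under $s_{\{123,132\}}$ and then verifying that the resulting graph routes every position to $n - 2k - 1$. I suspect the cleanest route is to reduce the dynamics of $k+1$ to a stack-sort-like process on the subword of elements $\geq k+1$, where the pinned smaller elements at the tail of $\pi$ act as periodic re-initializations that decouple the analysis into tractable pieces.
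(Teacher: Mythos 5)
Your architecture coincides exactly with the paper's: the paper's Claim~1 is your Stability statement (the pinned suffix $\pi(n-2j+1)=j$ is preserved, and moreover every unpinned entry to the right of the pinned block shifts two positions left, with $\pi(1)$ wrapping to position $n$), and its Claim~2 is your Progress statement. Your Stability analysis and your $k=0$ base case are sound. The problem is that Progress for general $k$ --- which you yourself flag as ``the main obstacle'' and which carries essentially all the content of the lemma --- is never actually proved: ``a case analysis \dots should yield a finite directed graph \dots whose only attractor is $n-2k-1$'' is a restatement of what must be shown, not an argument. As written, the proposal establishes that \emph{if} $k+1$ always eventually reaches position $n-2k-1$ then the lemma follows, but gives no transition rule and no reason the dynamics could not, say, cycle $k+1$ among positions other than $n-2k-1$ forever.

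The gap closes more easily than you anticipate, and with no graph-theoretic bookkeeping. The key observation is that $k+1$ is the \emph{minimum} of all entries outside the pinned suffix. Hence if $k+1$ sits at a position $a$ with $1<a<n-2k$, then when it is about to enter the stack everything above $\pi(1)$ must flush (any two larger entries above it would complete a $123$ or $132$), so $k+1$ lodges second from the bottom; it is then ejected precisely when the pinned entry $k$ arrives, at which moment exactly the $n-2k-3$ other unpinned entries preceding position $n-2k$ have been output, so $k+1$ lands at position $n-2k-1=n-2(k+1)+1$ in a \emph{single} iteration. If instead $k+1$ lies to the right of the pinned block, the shifting part of Claim~1 moves it two positions left per iteration until it enters the prefix region; and if $k+1=\pi(1)$, one iteration sends it to position $n$, reducing to the previous case. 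So the ``graph'' is three cases, each routed to $n-2(k+1)+1$ in at most roughly $n/2$ steps. You would need to supply this argument (or an equivalent one) for the proof to stand.
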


\begin{proof}
We will prove this lemma through a series of claims.\\ 

\noindent \textbf{Claim 1:} Let $\pi$ be a permutation of length $n\geq 3$ which ends in a half-decreasing permutation. In other words, for some $i>1$, if we let $m = \frac{n-i+1}{2}$ then we have that  $$\pi(i)\pi(i+2)\cdots \pi(n-1) = (m)(m-1)\cdots(1),$$ then these entries are fixed under $s_{\{123,132\}}$, and for all other $i<j<n$, $\pi(j)$ gets sorted to index $j-2$. \\

Let $\pi$ be as in Claim 1. While sending $\pi$ through the stack, when we get to the point that $\pi(1)\cdots\pi(i-1)$ is either sorted or in the stack, we must have at least two entries in the stack. Then, since all entries smaller than $\pi(i)$ come after index $i$, adding $\pi(i)$ to the stack would create a 123 or 132 permutation in the stack. Thus, all but one entry must leave the stack before $\pi(i)$ can enter. Note that throughout the entire process $\pi(1)$ stays at the bottom of the stack, as 123 and 132 cannot be formed by only two entries. Thus, at this point, we have $\pi(2),\dots,\pi(i)$ sorted into some order in the output, and $\pi(1)$ in the stack. At this point, $\pi(i)$ enters the stack and $\pi(i+1)$ also enters the stack, since it must be larger than $\pi(i)$.  Now, there are two possibilities: either this is the end and $\pi(i+1)$ and $\pi(i)$ pop out of the stack, leaving $\pi(i)$ at index $i$ and $\pi(i+1)$ at $i-1$, or there is more to sort. In the second case, since $\pi(i+2)$ is smaller than $\pi(1), \pi(i), \pi(i+1)$, we must have $\pi(i+1)$ and $\pi(i)$ leave the stack before $\pi(i+2)$ can enter. This would also place $\pi(i)$ at index $i$ and $\pi(i+1)$ at index $i-1$ in the output. The same reasoning applies to the other entries until the end of the permutation. Thus the claim holds.\\

\noindent \textbf{Claim 2:} Let $\pi$ be a permutation of length $n\geq 5$ such that for some $i>3$, and for $m = \frac{n-i+1}{2}$, we have that  $$\pi(i)\pi(i+2)\cdots\pi(n-1) = (m)(m-1)\cdots(1).$$ Then we have $$s_{\{123,132\}}^m(\pi)(i-2) = m+1$$ for some $m\in \N$. \\

Let $\pi$ be a permutation as in Claim 2. Let $a$ be such that $\pi(a) = m+1$. If $1<a<i$, then we will show that $s_{\{123,132\}}(\pi)(i-2) = m+1$. Let's look at what happens at the point when $\pi(a)$ is about to enter the stack. At this point, $\pi(1)$ is at the bottom of the stack along with at least one other entry, and $\pi(2),\dots,\pi(a-1)$ are either sorted or in the stack. Since $\pi(1),\dots,\pi(a-1)$ are all larger than $\pi(a)$, when $\pi(a)$ is about to enter the stack, first all but the bottom element of the stack must leave the stack. Thus, $\pi(a)$ enters the stack, second to bottom. Since $\pi(a+1),\dots,\pi(i)$ are all larger than $\pi(a)$, we have that $\pi(a)$ stays in the stack until $\pi(i)$ is about to get sorted, at which point, $\pi(a)$ leaves the stack. When this happens, $\pi(2),\dots,\pi(a-1),\pi(a+1),\dots,\pi(i-1)$ are all sorted. Thus, $\pi(a)$ gets moved to index $i-2$, as desired.

In the case that $\pi(a) = m+1$ and $i<a<n$, by Claim 1, in $\frac{n-i}{2}$ iterations of $s_{\{123,132\}}$, we have that $A$ is at index $i-1$ at which point the first case applies. 

In the case when $\pi(1) = m+1$, since $s_{\{123,132\}}(\pi)(n)= m+1$ after one sort, we would be in the second case.

Thus, in all cases, we have that the claim holds. \\

From Claim 1 and Claim 2, we can see that after having $s_{123,132}$ applied to it enough times, every permutation $\pi$ gets sorted to a half-decreasing permutation. \end{proof}

\begin{theorem}\label{thm: periodic}
The periodic points of $s_{\{123,132\}}$ are exactly the half-decreasing permutations.
\end{theorem}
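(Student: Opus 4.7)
The plan is to derive both directions of the equivalence directly from the two preceding lemmas; essentially all the real work is already done there.

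For the forward direction, suppose $\pi$ is half-decreasing of length $n$. The first step is to observe that $s_{\{123,132\}}(\pi)$ is itself half-decreasing: the explicit formula from \cref{lem: half-decreasing} places $\pi(2),\pi(4),\ldots,\pi(n-1)$ (for odd $n$) or $\pi(3),\pi(5),\ldots,\pi(n-1)$ (for even $n$) at precisely the same positions in the image, so the identity subsequence required by the definition of half-decreasing is preserved. Thus $s_{\{123,132\}}$ restricts to a self-map on the finite set of half-decreasing permutations of length $n$, and \cref{lem: half-decreasing} further tells us that this restriction acts as a cyclic left-shift on the remaining, non-decreasing-half entries. Since a cyclic shift on a finite set has finite order, iterating enough times returns to $\pi$, so $\pi$ is a periodic point.

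For the converse, suppose $\pi$ is a periodic point with $s_{\{123,132\}}^{k}(\pi)=\pi$. By \cref{lem: eventually-half-dec}, some iterate $s_{\{123,132\}}^{m}(\pi)$ is half-decreasing. Choose $N\in\N$ with $Nk\geq m$; then
\[
\pi \;=\; s_{\{123,132\}}^{Nk}(\pi) \;=\; s_{\{123,132\}}^{\,Nk-m}\bigl(s_{\{123,132\}}^{m}(\pi)\bigr),
\]
and by the observation in the previous paragraph, iterating $s_{\{123,132\}}$ on a half-decreasing permutation yields only half-decreasing permutations. Hence $\pi$ is half-decreasing, as required.

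There is no substantive obstacle beyond the two lemmas already in place; the only small verification is that the formula of \cref{lem: half-decreasing} carries the half-decreasing property forward, and this is immediate from reading off which indices contain which entries in that formula.
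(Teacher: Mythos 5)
Your proposal is correct and follows essentially the same route as the paper: the forward direction uses \cref{lem: half-decreasing} to see that the map permutes the half-decreasing permutations (fixing the decreasing half and cyclically shifting the rest, hence has finite order on them), and the converse combines \cref{lem: eventually-half-dec} with the invariance of the half-decreasing property under iteration. Your explicit check that the formula in \cref{lem: half-decreasing} preserves half-decreasingness is a small point the paper leaves implicit, but the argument is the same.
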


\begin{proof}
Let $\pi$ be a half-decreasing permutation. By \cref{lem: half-decreasing}, we have that the decreasing half is fixed and all other entries are shifted to the left cyclically under $s_{\{123,132\}}$. Since there are $ \lceil\frac{n+1}{2}  \rceil $ entries not in the decreasing half, $s_{\{123,132\}}^{\left \lceil\frac{n+1}{2} \right \rceil}(\pi) = \pi$.

Let $\pi$ be not half-decreasing. Then for some $m\in\N$, $s_{\{123,132\}}^m(\pi)$ is half-decreasing. So, for all $\ell>m$, $s_{\{123,132\}}^\ell(\pi)$ is also half-decreasing. Thus, the trajectory of $\pi$ can never return to $\pi$ and so $\pi$ is not a periodic point.   
\end{proof}

\cref{thm: periodic} results in the following corollaries.

\begin{cor}\label{cor: orbit-size}
Every periodic orbit of the map $s_{123,132}:S_n\to S_n$ has size  $\left \lceil \frac{n+1}{2} \right \rceil. $
\end{cor}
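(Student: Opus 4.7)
The plan is to derive this corollary as an immediate consequence of \cref{thm: periodic} and \cref{lem: half-decreasing}. By \cref{thm: periodic} the only periodic points of $s_{\{123,132\}}$ are the half-decreasing permutations, so it suffices to compute the orbit size of a single arbitrary half-decreasing $\pi\in S_n$.

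By \cref{lem: half-decreasing}, the map $s_{\{123,132\}}$ fixes every entry of $\pi$ in the decreasing half and cyclically left-shifts the remaining entries among themselves. I would next count the positions outside the decreasing half. For odd $n$ the decreasing half occupies the positions $\{2,4,\dots,n-1\}$, leaving the $(n+1)/2$ positions $\{1,3,5,\dots,n\}$; for even $n$ it occupies $\{3,5,\dots,n-1\}$, leaving the $n/2+1$ positions $\{1,2,4,6,\dots,n\}$. In either parity this count equals $\lceil (n+1)/2 \rceil$.

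Since the entries of a permutation are pairwise distinct, a cyclic shift on a tuple of $k$ pairwise distinct values has order exactly $k$: any smaller positive power of the shift would force a coincidence between two distinct entries. Applied with $k=\lceil (n+1)/2 \rceil$, this shows $s_{\{123,132\}}^{\lceil (n+1)/2 \rceil}(\pi)=\pi$ while no smaller positive iterate fixes $\pi$, so the orbit of every half-decreasing $\pi$ has exactly size $\lceil (n+1)/2 \rceil$.

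There is essentially no obstacle beyond what has already been done in \cref{lem: half-decreasing}; the only care needed is the index bookkeeping to confirm the off-decreasing-half count in both parity cases, and the observation that distinctness of the shifted entries rules out a shorter period. A sanity check in small cases (e.g.\ the two length-$3$ orbits $213\leftrightarrow 312$ with $\lceil 4/2\rceil=2$, and the size-$3$ orbits among the six half-decreasing permutations of length $4$ with $\lceil 5/2\rceil=3$) confirms the formula.
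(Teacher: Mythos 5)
Your proof is correct and follows essentially the same route as the paper: the paper leaves this corollary as an immediate consequence of \cref{thm: periodic}, with the key computation (that the $\left\lceil \frac{n+1}{2}\right\rceil$ entries outside the decreasing half are cyclically shifted, so $s_{\{123,132\}}^{\lceil (n+1)/2\rceil}(\pi)=\pi$) already appearing verbatim in the proof of \cref{thm: periodic}. Your one addition --- noting that distinctness of the shifted entries forces the period to be \emph{exactly} $\left\lceil \frac{n+1}{2}\right\rceil$ rather than merely a divisor of it --- is a point the paper glosses over and is worth making explicit.
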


\begin{cor}
The map $s_{\{123,132\}}: S_n \to S_n$ has $\lfloor\frac{n}{2}\rfloor!$ periodic orbits. 
\end{cor}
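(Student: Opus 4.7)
The plan is to count the half-decreasing permutations directly and then divide by the common orbit size. By \cref{thm: periodic} the periodic points of $s_{\{123,132\}}$ are exactly the half-decreasing permutations, and by \cref{cor: orbit-size} every periodic orbit has the same size $\lceil(n+1)/2\rceil$, so the number of orbits is just the number of half-decreasing permutations divided by $\lceil(n+1)/2\rceil$.

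The key observation is that the definition of half-decreasing is extremely rigid: it pins down $\lfloor(n-1)/2\rfloor$ positions to prescribed values (namely $\pi(n-1)=1,\pi(n-3)=2,\dots$, terminating at index $2$ when $n$ is odd and at index $3$ when $n$ is even), while placing no constraint whatsoever on the remaining $n - \lfloor(n-1)/2\rfloor = \lceil(n+1)/2\rceil$ positions. First I would note that once those pinned values are placed, the unused values are exactly $\{\lfloor(n-1)/2\rfloor+1,\dots,n\}$, which is a set of size $\lceil(n+1)/2\rceil$, matching the number of free positions. Any bijective assignment of these values to these positions produces a valid half-decreasing permutation, and distinct assignments yield distinct permutations, so the total count is $\lceil(n+1)/2\rceil!$.

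Dividing by the orbit size then gives
$$\frac{\lceil(n+1)/2\rceil!}{\lceil(n+1)/2\rceil} = \bigl(\lceil(n+1)/2\rceil-1\bigr)! = \lfloor n/2\rfloor!,$$
where the last equality is a one-line verification in the odd and even cases separately. The entire argument is bookkeeping; the only thing to take any care with is the parity case analysis to confirm that $\lceil(n+1)/2\rceil - 1 = \lfloor n/2\rfloor$ in both parities, and that the decreasing-half positions are indeed the sole constraint imposed by the definition.
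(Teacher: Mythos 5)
Your proposal is correct and follows essentially the same route as the paper: count the half-decreasing permutations as $\left(\left\lceil \frac{n+1}{2}\right\rceil\right)!$ by observing that the decreasing half is fully pinned down and the remaining $\left\lceil \frac{n+1}{2}\right\rceil$ values may be placed freely, then divide by the common orbit size from \cref{cor: orbit-size}. The arithmetic and the parity check match the paper's argument exactly.
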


\begin{proof}
The largest element of the decreasing half of a permutation is $m = \lfloor\frac{n-1}{2}\rfloor$. Then there are $n-m$ elements not in the decreasing half of the permutation. So, since there are $(n-m)!$ many ways to arrange the elements not in the decreasing half, there are $(n-m)! = \left(\lceil \frac{n+1}{2}\rceil\right)!$ many half-decreasing permutations. There are $\lceil \frac{n+1}{2}\rceil$ many elements in each orbit and so we have that there are $$\frac{\left(\lceil \frac{n+1}{2}\rceil\right)!}{\lceil \frac{n+1}{2})\rceil} = \left(\left\lceil \frac{n+1}{2}\right\rceil-1\right)! = \left\lfloor \frac{n}{2}\right\rfloor ! $$ many orbits.   
\end{proof}

\begin{definition}
Let $\pi$ be a permutation of length $n$. We say that $\pi$ is \textit{half-increasing} if its complement is half-decreasing. 
\end{definition}

\begin{cor}
The periodic points of $s_{\{312,321\}}$ are exactly the half-increasing permutations.
\end{cor}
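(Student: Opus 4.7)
The plan is to deduce this corollary directly from \cref{thm: periodic} using the complement symmetry established in \cref{lem: complement}. The first step is simply to check that $\{123,132\}^c = \{321,312\}$, since $(123)^c = 321$ and $(132)^c = 312$, and that this set is reduced so that \cref{lem: complement} applies without issue.

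Applying \cref{lem: complement} with $T = \{123,132\}$, and replacing $\pi$ by $\pi^c$ (using $(\pi^c)^c = \pi$), one obtains for every permutation $\pi$ the identity
\[
s_{\{312,321\}}(\pi) \;=\; \bigl(s_{\{123,132\}}(\pi^c)\bigr)^c.
\]
A straightforward induction on $k$ then yields
\[
s_{\{312,321\}}^{\,k}(\pi) \;=\; \bigl(s_{\{123,132\}}^{\,k}(\pi^c)\bigr)^c
\]
for every $k \geq 1$. In particular $s_{\{312,321\}}^{\,k}(\pi) = \pi$ if and only if $s_{\{123,132\}}^{\,k}(\pi^c) = \pi^c$, so $\pi$ is a periodic point of $s_{\{312,321\}}$ precisely when $\pi^c$ is a periodic point of $s_{\{123,132\}}$.

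Now I would invoke \cref{thm: periodic}: the periodic points of $s_{\{123,132\}}$ are exactly the half-decreasing permutations. Combining this with the equivalence above gives that $\pi$ is periodic under $s_{\{312,321\}}$ iff $\pi^c$ is half-decreasing, which is exactly the definition of $\pi$ being half-increasing, completing the proof. There is no real obstacle here; the only care required is the verification $\{123,132\}^c = \{312,321\}$ and the routine inductive promotion of the complement identity to all iterates of the map.
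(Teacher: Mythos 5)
Your argument is correct and is exactly the route the paper takes: the paper's entire proof is the one-line remark that the corollary follows from \cref{lem: complement}, and your write-up simply fills in the routine details (the computation $\{123,132\}^c=\{321,312\}$, the promotion of the complement identity to all iterates, and the appeal to \cref{thm: periodic} together with the definition of half-increasing). Nothing is missing.
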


This corollary follows from \cref{lem: complement}.

\section{Open Problems}

We conclude with the following conjectures and questions. 

\begin{conj}
The only periodic points of $s_{\{132,213\}}$ and  $s_{\{231,213\}} $ are the identity and its reverse. 
\end{conj}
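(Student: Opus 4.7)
The plan starts by verifying directly that $\{\id_n, \id_n^r\}$ is a $2$-cycle under both maps. For $s_{\{132,213\}}$, trace $\id_n$ through the algorithm: after step $i$ the stack reads $i, i-1, \ldots, 1$ from top to bottom, a word of pattern $321$, which avoids $\{132,213\}$; so every entry enters and the unloading outputs $\id_n^r$. Sorting $\id_n^r$ fills the stack with pattern $123$, which also avoids the forbidden patterns, and unloading returns $\id_n$. The same computation works for $s_{\{231,213\}}$.

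The central tool for ruling out other periodic points is the following reduction lemma, valid for both choices of $T$: if $\pi(1) = n$, then $s_T(n\rho) = s(\rho)\cdot n$, where $s$ is the classical stack-sort map. The reason is that $n$, being maximal, cannot play the small or middle role in any $132$ or $213$ pattern, so $n$ enters first and sits at the bottom of the stack throughout; the remaining constraint on length-$3$ subsequences involving $n$ forces the entries above $n$ to be increasing top-to-bottom, which is precisely the classical stack-sort invariant. Thus the dynamics above $n$ are exactly those of classical stack sort on $\rho$, so when the input is exhausted the unload produces $s(\rho)\cdot n$.

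Granting the reduction lemma, the strategy is to show by strong induction on $n$ that the only periodic points are $\id_n$ and $\id_n^r$, with the base case $n = 3$ verified by direct enumeration (in fact, already $s_T^3(S_3) = \{\id_3, \id_3^r\}$ for each of the two sets $T$). The inductive step hinges on the sub-lemma that every trajectory $\pi \to s_T(\pi) \to s_T^2(\pi) \to \cdots$ eventually reaches a permutation whose first entry is $n$. Once this happens, the reduction lemma kicks in: one iterate later we are of the form $s(\rho)\cdot n$, and the well-known fact that classical stack sort reaches the identity in at most $n-1$ iterations, together with careful tracking of how $n$ is re-inserted by subsequent iterates of $s_T$, drives the trajectory into $\id_n$ and hence the $2$-cycle.

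The main obstacle is establishing the sub-lemma — that the value $n$ eventually reaches position $1$. The position of $n$ is not a monovariant under $s_T$: for example, under $s_{\{132,213\}}$, the permutation $\pi = 2314$ sends the position of $n = 4$ through $4, 2, 1, 4, 1, 4, \ldots$, so a naive distance-to-endpoint argument fails. To attack this I would apply the $T$-clumping recursion of \cref{thm:recursion}, which isolates the leftmost (colex) occurrence of a pattern in $T^r$ that triggers the first pop, and use it to read off how the clumping structure, and hence the position of $n$, evolves from one iterate to the next. I expect a suitably refined lexicographic monovariant — combining the position of $n$ with data from the initial $T$-clumping — to strictly decrease on trajectories outside the $2$-cycle. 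For $s_{\{231,213\}}$ the set $T$ is complement-invariant (since $213^c = 231$), so \cref{lem: complement} yields $s_T(\pi^c) = s_T(\pi)^c$ and this case is symmetric under complement, letting the analysis of position $1$ and position $n$ proceed in parallel and simplifying the bookkeeping relative to the $s_{\{132,213\}}$ case.
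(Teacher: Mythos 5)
This statement is one of the paper's open conjectures; the paper supplies no proof, so your proposal can only be judged on whether it settles the problem on its own, and it does not. The parts you do carry out are fine: the verification that $\{\id_n,\id_n^r\}$ is a $2$-cycle under both maps is correct, and your reduction lemma $s_T(n\rho)=s(\rho)\cdot n$ is correct --- but it is not new, being exactly the mechanism behind \cref{213taustack} (both of your sets contain $213$, and that theorem's proof already shows that with $n$ pinned at the bottom the stack degenerates to a $21$-avoiding stack). The genuinely new content of your plan is concentrated in the two steps you leave open, and each contains a real gap.

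The first gap is the one you flag yourself: you never prove that every trajectory reaches a permutation whose first entry is $n$. Noting that the position of $n$ is not a monovariant and then positing that ``a suitably refined lexicographic monovariant'' should exist is a research direction, not an argument; no candidate quantity is defined, and \cref{thm:recursion} is cited only as a place one might look. The second gap is that even granting that sub-lemma, convergence to $\id_n$ does not follow. After the reduction lemma fires once, the trajectory sits at $s(\rho)\cdot n$, which does not begin with $n$, so the next application of $s_T$ is not classical stack sort of anything; the successive ``tails'' $\rho_1,\rho_2,\dots$ of those iterates that do begin with $n$ are not related by $\rho_{i+1}=s(\rho_i)$ (for $T=\{132,213\}$ and $\pi=4231$ one gets $4231\mapsto 2134\mapsto 4312\mapsto 1234$, with $\rho_1=231$, $s(\rho_1)=213$, but $\rho_2=312$), so Knuth's fact that iterating $s$ reaches the identity in at most $n-1$ steps is simply not available, and ``careful tracking of how $n$ is re-inserted'' is a placeholder for the missing proof that no cycle $n\rho_1\mapsto\cdots\mapsto n\rho_1$ with $\rho_1\neq\id_{n-1}$ exists. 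Relatedly, the announced strong induction on $n$ never closes: the reduction trades $s_T$ on $S_n$ for the classical $s$ on $S_{n-1}$, not for $s_T$ on $S_{n-1}$, so the inductive hypothesis about periodic points of $s_T$ in smaller length is never invoked in a usable form. The conjecture remains open.
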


Another interesting problem to consider is the following:
\begin{ques}
Given a set of permutations $T$, can one find a classification based on $T$ of the maximum number of preimages under the map $s_T$?
\end{ques}

Something else to look into is the size of the image of $s_T$. This question was studied in depth for the original stack sort by Bousquet-Mélou  \cite{Images} and Defant \cite{asymptotic}. 

\begin{ques}
For a given set of permutations $T$ what is the size of the image of $s_T$?
\end{ques}

\section*{Acknowledgements}
This research was conducted through the Duluth REU and was funded through NSF grant 1949884 and NSA grant H98230-20-1-0009. We would like to thank Joe Gallian for the REU, and Ilani Axelrod-Freed, Colin Defant, and Mihir Singhal for helpful discussions as well as Joe Gallian, Noah Kravitz, Yelena Mandelshtam, and Colin Defant for valuable comments.

\end{document}